\newtheorem{theorem}{Theorem}[section]
\newtheorem{lemma}[theorem]{Lemma}
\newtheorem*{lemma*}{Lemma}
\newtheorem{proposition}[theorem]{Proposition}
\newtheorem{corollary}[theorem]{Corollary}
\theoremstyle{definition}
\newtheorem{definition}[theorem]{Definition}
\newtheorem{example}[theorem]{Example}
\newtheorem{remark}[theorem]{Remark}
\theoremstyle{remark}
\numberwithin{equation}{section}
\begin{document}
\title[Conformal and holomorphic barycenter]{Conformal and holomorphic barycenters in hyperbolic balls}

\author[V. Ja\'cimovi\'c]{Vladimir Ja\'cimovi\'c}
\address{University of Montenegro, Faculty of natural sciences and mathematics, Podgorica, Cetinjski put b.b. 81000 Podgorica, Montenegro }
\email{vladimirj@ucg.ac.me }

\author[D. Kalaj]{David Kalaj}
\address{University of Montenegro, Faculty of natural sciences and mathematics, Podgorica, Cetinjski put b.b. 81000 Podgorica, Montenegro }
\email{davidk@ucg.ac.me}

%    General info
\footnote{2020 \emph{Mathematics Subject Classification}: Primary 30J10}

%\date{\today}

\keywords{M\"obius tranformations, automorphisms, unit ball, convex potentials}

%\begin{abstract}
%\end{abstract}

\maketitle

%\dedicatory{}
\begin{abstract}
We introduce the notions of \textit{conformal barycenter} and \textit{holomorphic barycenter} of a measurable set $D$ in the hyperbolic ball. The two barycenters coincide in the disk, but they differ in multidimensional balls $\mathbb{C}^m \cong \mathbb{R}^{2m}$. These notions are counterparts of barycenters of measures on spheres, introduced by Douady and Earle in 1986.
\end{abstract}

%\tableofcontents
\section{Introduction} \label{Intro}

Barycenter is the "center of mass" of a collection of points, weighted by their respective masses. For a set of points $\{x_i\}$ in the Euclidean space with corresponding weights $\{w_i\}$, the barycenter is their weighted average.

A conformal barycenter extends this notion to settings where we are dealing with distances and structures which are preserved under conformal maps. It is often determined as the point that minimizes a certain energy functional or as a fixed point of some iterative conformal process.

We start with the notion of barycenter in the unit disc $\mathbb{B}^2 = \{z \in \mathbb{C} \; : \; |z| \leq 1\}$ in the complex plane. Denote by $G$ the group of conformal automorphisms of $\mathbb{B}^2$ and by $G_+$ the subgroup of orientation preserving maps. The group $G_+$ consists of transformations of the following form
\begin{equation}
\label{Mobius_disc}
g_a(z) = e^{i \theta} \frac{a-z}{1-\bar a z}, \quad \theta \in [0,2 \pi), \; a \in \mathbb{B}^2.
\end{equation}
Denote by $d\lambda(z)$ the Lebesgue measure in the complex plane, then the hyperbolic measure reads
\begin{equation}
\label{hyp_meas_disc}
d \Lambda(z) = \frac{d \lambda(z)}{(1-|z|^2)^2}.
\end{equation}
The following assertions hold.

\begin{theorem} \label{disc2}
Let $D \subseteq \mathbb{B}^2$ be a Lebesgue-measurable set, such that $0<\Lambda(D)< + \infty$. Then there exists a unique  point  $c=c(D)\in \mathbb{B}^2$ such that  $$\int_{D} g_c(z) \mathrm{d}\Lambda(z)=0$$
where $g_c$ is the M\" obius transformation of the unit disc defined by (\ref{Mobius_disc}) with arbitrary $\theta \in [0,2 \pi)$.
\end{theorem}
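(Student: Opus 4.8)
The plan is to study, for a Lebesgue-measurable $D$ with $0<\Lambda(D)<\infty$, the map
$$
F_D(a)=\frac{1}{\Lambda(D)}\int_D g_a(z)\,\dtext\Lambda(z),\qquad g_a(z)=\frac{a-z}{1-\bar a z}\quad(a\in\mathbb{B}^2),
$$
and to show it has exactly one zero. Taking $\theta=0$ is harmless, since multiplying the integral by the unimodular constant $e^{i\theta}$ does not affect whether it vanishes. Because $|g_a(z)|\le 1$ on $\mathbb{B}^2$ and $\Lambda(D)<\infty$, the integral converges; dominated convergence with dominating function $1$ shows $F_D$ is continuous on $\mathbb{B}^2$, and since $|g_a(z)|<1$ for $\Lambda$-a.e.\ $z\in D$ (the circle $|z|=1$ is $\Lambda$-null) the same estimate gives $|F_D(a)|<1$ for every $a\in\mathbb{B}^2$.

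For existence I would first record the boundary behaviour of $g_a$. If $|\zeta|=1$ then $\bar\zeta(\zeta-z)=|\zeta|^2-\bar\zeta z=1-\bar\zeta z$, so $g_\zeta\equiv\zeta$ is constant; consequently $g_a(z)\to\zeta$ as $a\to\zeta$ for each fixed $z\in\mathbb{B}^2$, uniformly on compact subsets of the open disc. Splitting $D$ into a compact part $\{|z|\le R\}\cap D$ and a tail of arbitrarily small $\Lambda$-measure upgrades this to $F_D(a)\to\zeta$ as $a\to\zeta$, so $F_D$ extends continuously to $\overline{\mathbb{B}^2}$ with $F_D|_{\partial\mathbb{B}^2}=\mathrm{id}$. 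Degree theory then finishes: since $0\notin F_D(\partial\mathbb{B}^2)=\partial\mathbb{B}^2$, the Brouwer degree $\deg(F_D,\mathbb{B}^2,0)$ is defined and depends only on the boundary values, hence equals $\deg(\mathrm{id},\mathbb{B}^2,0)=1\neq 0$, so there is $c\in\mathbb{B}^2$ with $F_D(c)=0$. (Equivalently, if $F_D$ avoided $0$ on $\mathbb{B}^2$, then $a\mapsto F_D(a)/|F_D(a)|$ would be a retraction of $\overline{\mathbb{B}^2}$ onto $\partial\mathbb{B}^2$.)

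For uniqueness I would exploit the M\"obius-invariance of $\dtext\Lambda$. Suppose $F_D(c_1)=F_D(c_2)=0$ with $c_1\neq c_2$, and put $D'=g_{c_1}(D)$. Since $g_{g_{c_1}(a)}\circ g_{c_1}$ and $g_a$ are holomorphic automorphisms both sending $a$ to $0$, they differ by a rotation, so substituting $w=g_{c_1}(z)$ and using $\Lambda(D')=\Lambda(D)$ yields $F_{D'}(g_{c_1}(a))=R_\alpha F_D(a)$ for some rotation $R_\alpha$. Hence $F_{D'}$ vanishes at $0=g_{c_1}(c_1)$ and at $g_{c_1}(c_2)\neq 0$; replacing $D'$ by a rotated copy (which rotates the zeros and fixes $0$), we may assume the second zero is a real point $r\in(0,1)$. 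Now set $\phi(t)=\operatorname{Re}F_{D'}(t)$ for $t\in[0,r]$; then $\phi(0)=\phi(r)=0$, and differentiating under the integral (legitimate since $|1-tz|\ge 1-r>0$ there) gives
$$
\phi'(t)=\frac{1}{\Lambda(D')}\int_{D'}\operatorname{Re}\frac{1-z^2}{(1-tz)^2}\,\dtext\Lambda(z).
$$

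The heart of the matter is that this integrand is strictly positive. Writing $\operatorname{Re}\dfrac{1-z^2}{(1-tz)^2}=\dfrac{P(t;z)}{|1-tz|^4}$ with $P(t;z)=\operatorname{Re}\!\big[(1-z^2)(1-t\bar z)^2\big]$, a short expansion shows that for fixed $|z|$ and $t\in[0,1)$ the function $P(t;z)$ is a \emph{concave} quadratic in $\operatorname{Re}z$, hence attains its minimum over the admissible range $\operatorname{Re}z\in[-|z|,|z|]$ at an endpoint, i.e.\ when $z=\pm|z|$ is real, where it equals $(1-|z|^2)(1\mp|z|t)^2>0$. Therefore $\phi'(t)>0$ on $[0,r]$, so $\phi$ is strictly increasing and cannot satisfy $\phi(0)=\phi(r)$ — a contradiction, and uniqueness follows. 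I expect this pointwise positivity — which is precisely the strict monotonicity of $F_D$ along the geodesic after recentering at the origin, i.e.\ strict geodesic convexity of the natural potential — to be the only genuine obstacle; the continuity, boundary analysis, and topological existence argument are routine.
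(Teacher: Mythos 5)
Your proof is correct, but it follows a genuinely different route from the paper's. The paper derives Theorem \ref{disc2} as the $n=2$, $\mu=\Lambda$ case of Theorems \ref{prop2} and \ref{mainth1}: it introduces the potential $H$ of \eqref{potential_disc1}, writes it as $\int_A\log\cosh^2(d_h(\cdot,y))\,\mathrm{d}\mu(y)$, proves strict geodesic convexity via the comparison inequality \eqref{w2} for nonpositively curved metrics, gets a unique minimizer from the boundary blow-up, and then identifies zeros of the vector field with stationary points of the recentered potential, so that both existence and uniqueness come from the single variational statement. You instead prove existence topologically --- the continuous extension of $F_D$ to $\overline{\mathbb{B}^2}$ with identity boundary values plus Brouwer degree, which is essentially Douady--Earle's original mechanism and requires no potential at all --- and prove uniqueness by an explicit computation: after recentering one zero at the origin you show $\mathrm{Re}\,F_{D'}$ is strictly increasing on the segment to the other zero, because $\mathrm{Re}\frac{1-z^2}{(1-tz)^2}>0$ on the disc (your reduction to a concave quadratic in $\mathrm{Re}\,z$ with $x^2$-coefficient $-2(1-t^2)$ and endpoint values $(1-|z|^2)(1\mp t|z|)^2$ checks out; the cubic and quartic terms in $\mathrm{Re}\,z$ do cancel). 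Your argument is elementary and self-contained --- no Riemannian geometry, no Jost-type inequality --- and it yields the boundary extension of the barycenter field as a byproduct; its pointwise positivity step is exactly the strict convexity of the paper's potential along the real geodesic, verified by hand in the one case where it is needed. What the paper's approach buys is the identification of the barycenter as the minimizer of an explicit invariant potential (which is itself one of the announced results) and a convexity lemma that works verbatim in all Poincar\'e and Bergman balls, where your algebraic positivity computation would have to be redone dimension by dimension.
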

\begin{definition}
We say that the point $c$ is the conformal barycenter of the set $D$.
\end{definition}

For a Lebesgue-measurable set $A \subseteq \mathbb{B}^2$ consider the following function
\begin{equation}
\label{potential_disc1}
H(z) = - \int_A \log \frac{(1-|z|^2)(1-|\zeta|^2)}{|1 - z \bar \zeta|^2} d \Lambda(\zeta).
\end{equation}
Then we prove
\begin{theorem}
For any Lebesgue-measurable set $A$, such that $0<\Lambda(A)<+ \infty$ the following assertions hold:

\begin{enumerate}
\item
The function $H(z)$ has a unique global minimum on $\mathbb{B}^2$.
\item
The minimum of $H(z)$ is the conformal barycenter of $A$.
\end{enumerate}
\end{theorem}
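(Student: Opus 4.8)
The plan is to identify $H$ with a smooth function on $\mathbb{B}^2$ that is strictly convex along hyperbolic geodesics and whose (Euclidean) gradient vanishes at exactly one point, the conformal barycenter $c=c(A)$ furnished by Theorem~\ref{disc2}; for such a function that point is automatically the unique global minimum. The computational backbone is the elementary identity $|1-\bar\zeta z|^2-|\zeta-z|^2=(1-|z|^2)(1-|\zeta|^2)$, which rewrites the integrand of (\ref{potential_disc1}) as
\begin{equation*}
-\log\frac{(1-|z|^2)(1-|\zeta|^2)}{|1-z\bar\zeta|^2}=-\log\bigl(1-|g_\zeta(z)|^2\bigr)=2\log\cosh\frac{\rho(z,\zeta)}{2}\ \ge\ 0,
\end{equation*}
where $g_\zeta(z)=\frac{\zeta-z}{1-\bar\zeta z}$ is the automorphism (\ref{Mobius_disc}) with $a=\zeta$, $\theta=0$, and $\rho$ is the hyperbolic distance on $\mathbb{B}^2$, so that $|g_\zeta(z)|=\tanh\frac{\rho(z,\zeta)}{2}$ and $\cosh\rho(z,\zeta)=1+\frac{2|z-\zeta|^2}{(1-|z|^2)(1-|\zeta|^2)}$ is real-analytic in $z\in\mathbb{B}^2$. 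Consequently each integrand is a smooth function of $z$ whose $z$-derivatives are locally bounded in $z$ uniformly in $\zeta$, so, \emph{once $H$ is known to be finite} (see the final paragraph), $H\in C^\infty(\mathbb{B}^2)$ and one may differentiate under the integral sign.

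Writing the integrand additively as $-\log(1-|z|^2)-\log(1-|\zeta|^2)+\log|1-\bar\zeta z|^2$ and using that $\log(1-\bar\zeta z)$ is holomorphic in $z$ (since $|\bar\zeta z|<1$), a short simplification yields
\begin{equation*}
\frac{\partial H}{\partial\bar z}(z)=\int_A\frac{z-\zeta}{(1-|z|^2)(1-\bar z\zeta)}\,\mathrm d\Lambda(\zeta)=\frac{1}{1-|z|^2}\int_A g_z(\zeta)\,\mathrm d\Lambda(\zeta).
\end{equation*}
Since $H$ is real-valued, $\nabla H(z)=0$ is equivalent to $\frac{\partial H}{\partial\bar z}(z)=0$, i.e. to $\int_A g_z(\zeta)\,\mathrm d\Lambda(\zeta)=0$ — precisely the equation defining the conformal barycenter of $A$. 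By Theorem~\ref{disc2} it has a unique solution $c=c(A)$, so $H$ has exactly one critical point.

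It remains to exploit convexity. For fixed $\zeta$ the function $t\mapsto\rho(\gamma(t),\zeta)$ is convex for every unit-speed hyperbolic geodesic $\gamma$ (a standard fact, the ball with its hyperbolic metric being non-positively curved), while $\phi(s):=2\log\cosh\frac s2$ has $\phi'(s)=\tanh\frac s2\ge0$ and $\phi''(s)=\tfrac12\bigl(1-\tanh^2\tfrac s2\bigr)>0$. Differentiating the smooth function $t\mapsto -\log(1-|g_\zeta(\gamma(t))|^2)=\phi(\rho(\gamma(t),\zeta))$ twice and using $\phi''>0$, $\phi'\ge0$ together with the convexity of $\rho(\gamma(\cdot),\zeta)$ shows its second derivative is strictly positive; hence each integrand is strictly convex along every geodesic segment, and integration over $(A,\Lambda)$, which has positive mass, preserves this, so $H$ is strictly convex along every geodesic of $\mathbb{B}^2$. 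Therefore the restriction of $H$ to any geodesic through $c$ is convex with vanishing derivative at $c$, so $c$ is a global minimum, and strict convexity forbids any second minimizer: $H$ has a unique global minimum, located at $c(A)$. This is (1) and (2). (Existence of the minimizer also follows from coercivity — if $z_n\to\partial\mathbb{B}^2$ then $\rho(z_n,\zeta)\to+\infty$ for each $\zeta$, whence $H(z_n)\to+\infty$ by Fatou's lemma and $\Lambda(A)>0$ — reproving the existence half of Theorem~\ref{disc2}.)

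The genuinely delicate point, which I would settle first, is finiteness of $H$. From the additive decomposition $H(z)=\Lambda(A)\log\frac{1}{1-|z|^2}+\int_A\log\frac{|1-\bar\zeta z|^2}{1-|\zeta|^2}\,\mathrm d\Lambda(\zeta)$ and the fact that $|1-\bar\zeta z|$ stays between two positive constants depending only on $|z|$, the differences $H(z)-H(w)$ are always finite; hence $H$ is either finite throughout $\mathbb{B}^2$ or identically $+\infty$, the latter exactly when $\int_A\log\frac{1}{1-|\zeta|^2}\,\mathrm d\Lambda(\zeta)=+\infty$. This integral can diverge even with $\Lambda(A)<\infty$ (for instance when $A$ is a union of suitably shrinking disks accumulating at $\partial\mathbb{B}^2$), so the statement should be read with the implicit standing hypothesis $\int_A\log\frac{1}{1-|\zeta|^2}\,\mathrm d\Lambda<\infty$ (automatic, e.g., when $A$ is bounded away from $\partial\mathbb{B}^2$). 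Granting this, the remaining steps are routine.
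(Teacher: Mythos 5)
Your proof is correct and its convex-analytic core coincides with the paper's: both rewrite the integrand as an increasing strictly convex function of the hyperbolic distance ($\log\cosh^2 d_h$ in the paper, $2\log\cosh(\rho/2)$ with $\rho=2d_h$ in yours), invoke geodesic convexity of $d_h(\cdot,\zeta)$, and use blow-up at the boundary for existence. Two points differ. First, for identifying the minimum with the barycenter you compute $\partial H/\partial\bar z$ at an arbitrary $z$ and observe that the critical-point equation is literally $\int_A g_z(\zeta)\,\mathrm d\Lambda(\zeta)=0$; the paper (in the general Theorem~\ref{mainth1}) instead conjugates by the involution $h_a$ to move the minimum to the origin and evaluates the gradient only there. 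Your route is more direct, but be aware that within the paper's architecture Theorem~\ref{disc2} is itself \emph{derived} from the potential having a unique minimum, so you should not lean on it for uniqueness of the critical point; your own coercivity-plus-strict-convexity argument already supplies existence and uniqueness of the minimizer, and that is the version to keep. Second, your observation that $H$ may be identically $+\infty$ unless $\int_A\log\frac{1}{1-|\zeta|^2}\,\mathrm d\Lambda(\zeta)<\infty$ (which can fail even with $0<\Lambda(A)<\infty$, e.g.\ for sets accumulating fast enough at the boundary) is a genuine gap in the paper's statement and proof --- the paper only checks coercivity in $x$ and silently assumes finiteness of the integral in $y$. Note that this affects only the potential-function formulation, not the barycenter equation itself, whose integrand $g_z(\zeta)$ is bounded; so your suggested standing hypothesis is exactly the right repair.
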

Moreover
\begin{theorem}
Let $\zeta_1,\dots, \zeta_N$ be points in the unit disk $\mathbb{B}^2$. Then there exists unique (up to a rotation) M\"obius transformation of the form \eqref{Mobius_disc}, such that $$\sum_{k=1}^N g_c(\zeta_k)=0.$$
\end{theorem}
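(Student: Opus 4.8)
The plan is to transcribe to the atomic case the potential--theoretic argument behind the preceding theorem. We take the $\zeta_k$ in the open disk $\{|z|<1\}$ (the only case in which the relevant M\"obius maps are automorphisms) and introduce the discrete potential
\[
  H(z)\;=\;-\sum_{k=1}^{N}\log\frac{(1-|z|^2)(1-|\zeta_k|^2)}{|1-z\bar\zeta_k|^2},\qquad |z|<1,
\]
the sum--analogue of \eqref{potential_disc1}. Writing $\varphi_a(w)=\dfrac{w-a}{1-\bar a w}$ for the M\"obius automorphism of the disk (so that the map $g_a$ of \eqref{Mobius_disc} is $-e^{i\theta}\varphi_a$), and using the identity $1-|\varphi_a(w)|^2=\dfrac{(1-|w|^2)(1-|a|^2)}{|1-\bar a w|^2}$ together with the symmetry $|\varphi_z(\zeta_k)|=|\varphi_{\zeta_k}(z)|$ of the pseudohyperbolic distance, one rewrites
\[
  H(z)\;=\;-\sum_{k=1}^{N}\log\bigl(1-|\varphi_{\zeta_k}(z)|^2\bigr)\;=\;\sum_{k=1}^{N}u\bigl(\varphi_{\zeta_k}(z)\bigr),\qquad u(w):=-\log(1-|w|^2).
\]
Each summand is nonnegative, and $H$ is real--analytic on the open disk.

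First I would check coercivity: for each fixed $\zeta_k$ in the open disk, $|\varphi_{\zeta_k}(z)|\to 1$ as $|z|\to 1^-$ (on $|z|=1$ the numerator and denominator of $\varphi_{\zeta_k}$ have equal moduli), so $u(\varphi_{\zeta_k}(z))\to+\infty$; since the remaining summands are $\ge 0$, $H(z)\to+\infty$ as $|z|\to 1^-$. Being continuous and proper on the open disk, $H$ attains a global minimum at some $c$ with $|c|<1$.

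The crux --- the step I expect to be the main obstacle --- is the \emph{strict convexity of $H$ along hyperbolic geodesics}. For the building block $u(w)=-\log(1-|w|^2)$, computing the Riemannian Hessian with respect to the hyperbolic metric $\dfrac{|dw|^2}{(1-|w|^2)^2}$ gives, with $w=x+iy$ and $r=|w|$,
\[
  \nabla^2 u\;=\;\frac{2}{1-r^2}\,I\;+\;\frac{4}{(1-r^2)^2}\begin{pmatrix} y^2 & -xy\\[2pt] -xy & x^2\end{pmatrix},
\]
and the last matrix is positive semidefinite (its eigenvalues are $0$ and $r^2$), so $\nabla^2 u$ is positive definite throughout the disk; equivalently, $u$ is a strictly convex, increasing function of the hyperbolic distance to the origin. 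Since each $\varphi_{\zeta_k}$ is a hyperbolic isometry, every $u\circ\varphi_{\zeta_k}$ is strictly geodesically convex, and hence so is the finite sum $H$. A coercive, continuous, strictly geodesically convex function on the open disk has a unique minimizer, and that minimizer is its only critical point.

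It remains to identify the critical points of $H$ with the solutions of the barycenter equation. A direct computation yields
\[
  \partial_z H(z)\;=\;\frac{1}{1-|z|^2}\sum_{k=1}^{N}\frac{\bar z-\bar\zeta_k}{1-z\bar\zeta_k}\;=\;\frac{1}{1-|z|^2}\,\overline{\sum_{k=1}^{N}g_z(\zeta_k)}\,,
\]
where the M\"obius transformations are taken with $\theta=0$; since $H$ is real--valued, $z$ is a critical point of $H$ if and only if $\sum_{k=1}^{N}g_z(\zeta_k)=0$. Hence this equation has exactly one solution $c$ in the open disk, namely the minimizer of $H$. Finally, replacing $g_c$ by $e^{i\theta}g_c$ merely multiplies the sum by the nonzero number $e^{i\theta}$ and so does not affect the equation --- this is the stated ``uniqueness up to a rotation'' --- while the center $c$ itself is uniquely determined. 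Everything except the convexity step is a routine adaptation of the continuous case.
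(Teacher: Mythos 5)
Your proof is correct, and it follows the same overall strategy as the paper (minimize the logarithmic potential, show it is strictly geodesically convex and coercive, and identify its critical-point equation with the barycenter equation), but the two key technical steps are carried out by genuinely different means. For convexity, the paper first proves that $x\mapsto d_h(x,p)$ is geodesically convex via a triangle-inequality/comparison argument (its Lemma on $d_h$ and $d_B$, using nonpositive curvature) and then composes with the increasing convex function $\log\cosh^2 t$; you instead compute the Riemannian Hessian of the single building block $u(w)=-\log(1-|w|^2)$ explicitly and transport it by the isometries $\varphi_{\zeta_k}$. Your route is more self-contained for the disk and, notably, delivers \emph{strict} convexity cleanly, a point the paper's composition argument glosses over (composition of an increasing convex function with a convex one is only guaranteed to be convex). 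For the identification of critical points, the paper conjugates by $h_a$ so the minimum sits at the origin and differentiates there, whereas you compute $\partial_z H$ at a general point and read off $\partial_z H(z)=\frac{1}{1-|z|^2}\overline{\sum_k g_z(\zeta_k)}$ directly; this is slightly stronger since it shows \emph{every} critical point, not just the minimum, solves the barycenter equation. Both computations check out (the matrix $\left(\begin{smallmatrix} y^2 & -xy\\ -xy & x^2\end{smallmatrix}\right)$ indeed has eigenvalues $0$ and $r^2$), and your restriction to the open disk is consistent with the paper's framework, where the measure lives on the open ball.
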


\begin{definition}
The point $c$ from the above Theorem is said to be the conformal barycenter of points $\zeta_1,\dots, \zeta_N$.
 \end{definition}

For given points $\zeta_1,\dots,\zeta_N$ consider the following function
$$
H_N(z) = - \sum \limits_{i=1}^N \log \frac{(1-|z|^2)(1-|\zeta_i|^2)}{|1-\bar z \zeta_i|^2}.
$$
Then
\begin{theorem}
\begin{enumerate}
\item
The function $H_N(z)$ has a unique global minimum on $\mathbb{B}^2$.
\item
The minimum of $H_N(z)$ is the conformal barycenter of points $\zeta_1,\dots,\zeta_N$.
\end{enumerate}
\end{theorem}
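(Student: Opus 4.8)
The plan is to identify the critical points of $H_N$ with the conformal barycenters of $\zeta_1,\dots,\zeta_N$, and then invoke the preceding theorem, which already supplies existence and uniqueness of such a barycenter. First I would discard the $z$-independent constant $\sum_{k}\log(1-|\zeta_k|^2)$ and write $H_N(z)=-N\log(1-|z|^2)+\sum_{k=1}^{N}\log|1-\bar z\zeta_k|^2+\const$, which is real-analytic on $\mathbb{B}^2$ since there $1-|z|^2>0$ and $1-\bar z\zeta_k\neq 0$. Because each factor $\frac{(1-|z|^2)(1-|\zeta_k|^2)}{|1-\bar z\zeta_k|^2}$ equals $1-\bigl|\frac{z-\zeta_k}{1-\bar z\zeta_k}\bigr|^2\in(0,1]$, every summand of $H_N$ is $\ge 0$; moreover, as $|z|\to 1$ the numerator $(1-|z|^2)\to 0$ while $|1-\bar z\zeta_k|\ge 1-|\zeta_k|>0$ stays bounded below, so $H_N(z)\to+\infty$. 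Hence $H_N$ is continuous and proper on $\mathbb{B}^2$ and attains a global minimum at some interior point, which is necessarily a critical point.

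Next I would compute the gradient. Using Wirtinger derivatives and the fact that $z\mapsto\log(1-z\bar\zeta_k)$ is holomorphic on $\mathbb{B}^2$ (hence annihilated by $\partial_{\bar z}$), one obtains $\partial_{\bar z}H_N(z)=\frac{Nz}{1-|z|^2}-\sum_{k=1}^{N}\frac{\zeta_k}{1-\bar z\zeta_k}$. The key algebraic observation is the elementary identity $\frac{z-\zeta}{1-\bar z\zeta}=z-\frac{(1-|z|^2)\zeta}{1-\bar z\zeta}$, which upon summation gives $\sum_{k=1}^{N}\frac{z-\zeta_k}{1-\bar z\zeta_k}=(1-|z|^2)\,\partial_{\bar z}H_N(z)$. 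Since $H_N$ is real-valued, $\nabla H_N(z)=0$ if and only if $\partial_{\bar z}H_N(z)=0$, i.e. if and only if $\sum_{k=1}^{N}\frac{z-\zeta_k}{1-\bar z\zeta_k}=0$; multiplying by the nonzero constant $e^{i\theta}$, this is precisely the condition $\sum_{k=1}^{N}g_z(\zeta_k)=0$, for one and hence for every $\theta\in[0,2\pi)$.

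Finally, by the preceding theorem there is a unique $c\in\mathbb{B}^2$ with $\sum_{k=1}^{N}g_c(\zeta_k)=0$, so by the previous step $c$ is the unique critical point of $H_N$; combined with the first step (the global minimum is attained at a critical point), this shows that $H_N$ has a unique global minimum and that it sits at $c$, the conformal barycenter of $\zeta_1,\dots,\zeta_N$, proving both (1) and (2). The only genuinely delicate point is the bookkeeping in the gradient computation and spotting the identity that turns $\partial_{\bar z}H_N$ into the barycentric defect $\sum_k g_z(\zeta_k)$; everything else is routine (smoothness and properness of $H_N$) or already established. If one preferred a self-contained uniqueness argument avoiding the quoted theorem, one could instead observe that each summand equals $2\log\cosh\bigl(\tfrac12 d_{\mathbb{H}}(z,\zeta_k)\bigr)$ and is strictly convex along every hyperbolic geodesic, being the composition of the increasing convex function $t\mapsto\log\cosh(t/2)$ with the geodesically convex distance function $z\mapsto d_{\mathbb{H}}(z,\zeta_k)$; then $H_N$ is strictly geodesically convex and so has at most one critical point.
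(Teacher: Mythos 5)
Your proof is correct, and the computations check out: the decomposition of $H_N$, the properness estimate, the Wirtinger derivative $\partial_{\bar z}H_N(z)=\frac{Nz}{1-|z|^2}-\sum_k\frac{\zeta_k}{1-\bar z\zeta_k}$, and the identity $\sum_k\frac{z-\zeta_k}{1-\bar z\zeta_k}=(1-|z|^2)\,\partial_{\bar z}H_N(z)$ are all right, so critical points of $H_N$ are exactly the solutions of the barycenter equation. But your route differs from the paper's. The paper (in its general form, Theorem~\ref{prop2} together with Theorem~\ref{mainth1} and Corollary~\ref{t1}) first proves that $H_N$ is strictly geodesically convex --- writing each summand as $\log\cosh^2 d_h(z,\zeta_k)$ and composing the geodesically convex distance with the increasing convex function $\log\cosh^2$ --- which, with properness, gives the unique minimum directly; it then pulls the minimum back to the origin by a M\"obius map and evaluates the gradient there to read off the barycenter equation. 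You reverse the logical flow: you get existence from properness, characterize \emph{all} critical points as barycenters by a direct computation in the original coordinates (no change of variables), and import uniqueness from the preceding barycenter theorem. The one caveat is that in the paper's own architecture the uniqueness of the barycenter is itself \emph{derived} from the convexity of the potential, so leaning on it here would be circular if one followed the paper's proofs literally; your closing remark supplying the self-contained geodesic-convexity argument for uniqueness of the critical point removes this concern and essentially recovers the paper's Lemma~\ref{l1}/Proposition~\ref{prop} mechanism. What your main route buys is a cleaner identification of the full critical set via the explicit algebraic identity, avoiding the M\"obius conjugation; what the paper's route buys is independence from the Douady--Earle-type uniqueness statement and a proof that generalizes verbatim to arbitrary measures and to higher-dimensional Poincar\'e and Bergman balls, where the one-complex-variable Wirtinger bookkeeping is less convenient.
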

Furthermore
\begin{theorem} \label{Conf_invar}
 The conformal barycenter is conformally invariant. In other words, if $c$ is conformal barycenter of a set $D$, then $g(c)$ is conformal barycenter of $g(D)$ for any M\"obius transformation $g \in G_+$.
 \end{theorem}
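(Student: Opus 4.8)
The plan is to combine the characterization of the conformal barycenter from Theorem~\ref{disc2} with two standard facts about the group $G_+$: the M\"obius invariance of the hyperbolic measure $\Lambda$, and the rigidity statement that an element of $G_+$ fixing the origin must be a rotation.

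First I would record that the density in \eqref{hyp_meas_disc} is a constant multiple of the volume form of the hyperbolic metric on $\mathbb{B}^2$, which is preserved by all of $G$; hence $\Lambda(g(E)) = \Lambda(E)$ for every measurable $E \subseteq \mathbb{B}^2$ and every $g \in G$. In particular $0 < \Lambda(g(D)) = \Lambda(D) < +\infty$, so Theorem~\ref{disc2} applies to $g(D)$ and yields a unique point $c' = c(g(D))$ characterized by $\int_{g(D)} g_{c'}(w)\,\mathrm d\Lambda(w) = 0$; the task is to prove $c' = g(c)$.

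The core of the argument is to analyze the composition $\phi := g_{g(c)} \circ g$. It lies in $G_+$, and $\phi(c) = g_{g(c)}(g(c)) = 0$. Since $g_c$ also carries $c$ to $0$, the map $\phi \circ g_c^{-1} \in G_+$ fixes the origin, so by the Schwarz lemma it is a rotation $z \mapsto e^{i\alpha} z$; therefore $\phi = e^{i\alpha} g_c$, which is again a transformation of the form \eqref{Mobius_disc} associated with the point $c$ (only the free angle $\theta$ has changed). Then, substituting $w = g(z)$ and using the invariance of $\Lambda$,
\[
\int_{g(D)} g_{g(c)}(w)\,\mathrm d\Lambda(w) = \int_{D} \bigl(g_{g(c)} \circ g\bigr)(z)\,\mathrm d\Lambda(z) = e^{i\alpha}\int_{D} g_{c}(z)\,\mathrm d\Lambda(z) = 0,
\]
the last equality being exactly the defining property of $c = c(D)$ from Theorem~\ref{disc2}. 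Hence $g(c)$ satisfies the equation that characterizes $c(g(D))$, and by the uniqueness clause of Theorem~\ref{disc2} we get $c(g(D)) = g(c)$. The one step that is not pure bookkeeping is the identity $g_{g(c)} \circ g = e^{i\alpha} g_{c}$ — that two disk automorphisms agreeing at a single point differ only by a rotation — and that is where I expect the (quite mild) obstacle to be.

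An alternative route goes through the convex potential: since $1 - |g_\zeta(z)|^2 = \tfrac{(1-|z|^2)(1-|\zeta|^2)}{|1-z\bar\zeta|^2}$, the integrand in \eqref{potential_disc1} depends on $z$ and $\zeta$ only through the hyperbolic distance $d(z,\zeta)$, which $g$ preserves; together with the invariance of $\Lambda$ this gives the exact identity $H_{g(A)}(g(z)) = H_A(z)$, so the unique minimizer of $H_{g(A)}$ is the $g$-image of the unique minimizer of $H_A$, and the earlier theorem identifying the minimizer of $H$ with the conformal barycenter finishes the proof. Either way, the same short scheme carries over to the multidimensional and holomorphic barycenters once the corresponding invariance of hyperbolic volume and the rigidity of origin-fixing automorphisms are in hand.
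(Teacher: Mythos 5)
Your proposal is correct and follows essentially the same route as the paper: change variables by $g$, use the M\"obius invariance of $\Lambda$ (the paper cites the Jacobian formula \eqref{jaco}), and invoke the identity $g_{g(c)}\circ g = e^{i\alpha} g_c$ (the disk instance of the paper's \eqref{arbmo}, proved there by the same origin-fixing rigidity argument you use) to reduce to the defining equation of $c(D)$. The alternative route via the potential $H$ is a nice observation but is not needed; the main argument matches the paper's.
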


In the present paper we will prove generalizations of all the above statements for the case of unit balls and extend the definition of conformal barycenter to higher dimensions. In fact, for even-dimensional balls we will present two extensions which correspond to two non-equivalent metrics in balls.

\begin{remark}

The notion of conformal barycenter was first introduced by Douady and Earle in their seminal paper \cite{actaex}. With each probability measure $\mu$ on the unit circle $\mathbb{S}^1$ they associated a vector field in $\mathbb{B}^2$ in the following way  \begin{equation}\label{xi}\xi_\mu(w)=(1-|w|^2)\int_{\mathbb{S}^1}\frac{\zeta-w}{1-\zeta\bar{w}}d\mu(\zeta).\end{equation}
It is proven in \cite{actaex} that for each probability measure $\mu$ which does not contain heavy atoms there is a unique point $b(\mu)$ in $\mathbb{B}^2$ at which vector field $\xi_\mu(\cdot)$ vanishes. This point is said to be {\it conformal barycenter} of the measure $\mu$ and it is conformally invariant in the sense of Theorem \ref{Conf_invar}.

Authors further exploited properties of the conformal barycenters in order to demonstrate that any quasi-symmetric homeomorphism of the circle can be extended to the homeomorphism of disk in a conformally natural way.
\begin{comment}
More precisely, for given quasi-symmetric $f$ and a point $a$ from the unit disk, $E[f](a)$ is the barycenter of the measure $f_\ast(\nu_a)$, where $$\nu_a(E)=\frac{1}{2\pi}\int_{\mathbb{S}^1}\frac{1-|a|^2}{|\zeta-a|^2}|d\zeta|.$$ In fact, $w=E[f](a)$ is the unique root of the equation  $$\int_{\mathbb{S}^1}\varphi_w(f(\zeta))|d\varphi_a(\zeta)|=0.$$
\end{comment}
The definition (and the uniqueness property) of the conformal barycenter of a probability measure on the circle extends to higher dimensions (e.g. to probability measures on spheres), see \cite{actaex,cas}. \footnote{However, notice that the Douady-Earle conformally natural extension of homeomorphisms on spheres are not necessarily homeomorphisms in balls.}
In the present paper we build upon the idea of Douady and Earle and introduce conformal barycenters of subsets (or measures) of hyperbolic balls.
\end{remark}

\section{Preliminaries}

In order to facilitate the exposition and to avoid confusion with notations, we have to start with some common facts and concepts of Riemannian geometry.

\subsection{Preliminaries from the Riemannian geometry}

 A \textit{Riemannian metric} on a manifold \(M\) is a smoothly varying, positive-definite, symmetric bilinear form \(g_p\) on the tangent space \(T_pM\) at each point \(p \in M\).
 %It provides a way of measuring lengths of tangent vectors and angles between them.

More formally, for each point \(p \in M\), the metric \(g_p\) is a function:
\[
g_p : T_pM \times T_pM \rightarrow \mathbb{R}
\]
which satisfies:
\begin{itemize}
    \item \textit{Symmetry}: \( g_p(v, w) = g_p(w, v) \) for all \( v, w \in T_pM \),
    \item \textit{Bilinearity}: \( g_p(av + bw, u) = a g_p(v, u) + b g_p(w, u) \) for all \(a, b \in \mathbb{R}\) and \( v, w, u \in T_pM \),
    \item \textit{Positive-definiteness}: \( g_p(v, v) > 0 \) for all non-zero \(v \in T_pM\).
\end{itemize}

The metric provides a way of measuring:
\begin{itemize}
    \item \textit{Length of a vector} \( v \in T_pM \): \( |v| = \sqrt{g_p(v, v)} \),
    \item \textit{Angle between two vectors} \( v, w \in T_pM \):
    \[
    \cos(\theta) = \frac{g_p(v, w)}{|v||w|}
    \].
\end{itemize}

A \textit{Riemannian manifold} \( (M, g) \) is a smooth manifold \(M\) equipped with a Riemannian metric \(g\).
%The Riemannian metric allows the generalization of various geometric notions, such as lengths of curves, distances between points, angles, volumes, and curvature, to the manifold.

\textit{Length of a Curve.}
Given a smooth curve \( \gamma : [a, b] \to M \) on a Riemannian manifold \( (M, g) \), the \textit{length of the curve} is defined as:
\[
L(\gamma) = \int_a^b \sqrt{g_{\gamma(t)}\left(\dot{\gamma}(t), \dot{\gamma}(t)\right)} \, \mathrm{d}t,
\]
where \( \dot{\gamma}(t) \) is the velocity vector (tangent vector) of the curve at time \(t\), and \( g_{\gamma(t)}\left(\dot{\gamma}(t), \dot{\gamma}(t)\right) \) is the square of the speed (i.e. the norm squared of the velocity vector, measured in the Riemannian metric).

\textit{Distance.}
The \textbf{distance} between two points \(p, q \in M\) on a Riemannian manifold is defined as the infimum of the lengths of all smooth curves connecting \(p\) and \(q\). Formally,
\[
d_M(p, q) = \inf \{ L(\gamma) \mid \gamma : [a, b] \to M, \, \gamma(a) = p, \gamma(b) = q \}.
\]

\textit{Geodesics.}
A \textit{geodesic} is a curve that locally minimizes distance between points. In a Riemannian manifold, geodesics generalize the concept of "straight lines" in Euclidean space. Formally, geodesic \( \gamma(t) \) is a curve that satisfies the \textit{geodesic equation}, which is the second-order differential equation derived from the Riemannian metric.

\textit{Sectional Curvature.}
The sectional curvatures of a Riemannian manifold \( (M, g) \) is a measure of how the manifold curves in different directions at a given point. It is associated with 2-dimensional planes inside the tangent space of the manifold at each point.

Let \( (M, g) \) be a Riemannian manifold, and let \( p \in M \) be a point on the manifold. Consider a 2-dimensional plane \( \sigma \subset T_pM \) inside the tangent space at \(p\), spanned by two linearly independent tangent vectors \( v, w \in T_pM \). The sectional curvature \( K(\sigma) \) of the plane \( \sigma \) is a quantity defined in terms of the \textit{Riemann curvature tensor} \( R \) as follows:
\[
K(\sigma) = \frac{g_p\left( R(v, w)w, v \right)}{g_p(v, v) g_p(w, w) - g_p(v, w)^2}.
\]
This quantity measures how the manifold curves along the 2-dimensional subspace spanned by \( v \) and \( w \).

Here \( R(v, w) \) is the action of the Riemann curvature tensor on the vectors \( v \) and \( w \), while the denominator \( g_p(v, v) g_p(w, w) - g_p(v, w)^2 \) is the area of the parallelogram formed by the vectors \( v \) and \( w \) in the tangent space.

Two particular examples of Riemann manifolds are important for this paper. Before that, let us introduce some notation.
By $\mathbb{B}^n$ and $\mathbb{B}_m$ we denote the unit balls in $\mathbb{R}^n$ and in $\mathbb{C}^m$, respectively. We will sometimes use the notation $\mathbb{B}$ to denote any ball.

The norm of a vector $x=(x_1,\dots,x_n)\in \mathbb{R}^n$ is denoted by $|x|=\sqrt{\sum_{k=1}^n x_k^2}$. The norm on of a vector $z=(z_1,\dots,z_m) \in \mathbb{C}^m$ is $|z|=\sqrt{\left<z,z\right>}$, where $\left<z,w\right>=\sum_{k=1}^m z_k\overline{w_k}.$

By $\mathrm{d}\lambda(x)$ and $\mathrm{d}\lambda(z)$ we denote the Lebesgue measure in $\mathbb{R}^n$ and in $\mathbb{C}^m$ respectively. Then the hyperbolic measures read
\begin{equation} \label{hyp_meas}
\mathrm{d}\Lambda(x) =\frac{\mathrm{d}\lambda(x)}{(1-|x|^2)^n},\ \  \ \mathrm{d}\Lambda(z) =\frac{\mathrm{d}\lambda(z)}{(1-|z|^2)^{n+1}}.
\end{equation}
 We say that the set $D\subset\mathbb{B}$ is measurable if it is Lebesque-measurable and if $\Lambda(D):=\int_{D}\mathrm{d}\Lambda<\infty$.
\begin{example}
Let $\mathbb{B}^n$ be the unit ball in $\mathbb{R}^n$ equipped with the metric $$g_x(u,v) =  \frac{\left<u,v\right>}{(1-|x|^2)}, u, v\in \mathbb{R}^n.$$ We call $(\mathbb{B}^n, g)$ the hyperbolic ball. It is well-known that the hyperbolic ball has constant negative sectional curvature.
\end{example}

\begin{example}
Let $\mathbb{B}_m$ be the unit ball in $\mathbb{C}^m$ equipped with the metric $$g_z(u,v) =\Re \left<B(z)u,v\right>, \ \ u, v\in \mathbb{C}^m, z\in \mathbb{B}_m.$$ Here $$B(z)=(b(z)_{ij})_{i,j=1}^n \quad \mbox{ and } \quad b(z)_{ij}= \frac{1}{n+1}\frac{\partial^2}{\partial \overline{{z_i}}\partial z_j}K(z,z),$$ where $$K(z,w)=\frac{1}{n+1}\frac{1}{(1-\left<z,w\right>)^{n+1}}$$ is the Bergman kernel \cite{kezu}.

The Riemannian manifold $(\mathbb{B}_m, g)$ is named the Bergman  ball. Bergman balls have constant negative sectional curvature (\cite{arxiv}).
\end{example}

\subsubsection{Poincar\'e distance and M\"obius transformations of the unit ball}\label{sub1}

The Poincar\'e distance is given by
\begin{equation}\label{pome}d_h(x,y)=\frac{1}{2}\log \frac{1+R}{1-R},\end{equation} where $$ R=\frac{|x-y|}{\sqrt{\rho(x,y)}} \mbox{ and } \rho(x,a)=|x-a|^2+(1-|a|^2)(1-|x|^2).$$   M\"obius transformations of the unit ball, up to orthogonal transformation of the Euclidean space  are given by
\begin{equation} \label{Mobius_ball}
y=h_a(x)=\frac{a|x-a|^2+(1-|a|^2)(a-x)}{\rho(x,a)}.
\end{equation}
  It is well-known that the Poincar\'e metric is invariant under the action of  M\"obius transformations of the unit ball onto itself. Moreover $h_c^{-1}(x)=h_c(x)$ for every $c\in \mathbb{B}$.

 Now if $c\in \mathbb{B}$ is arbitrary and  $m$ is any M\"obius transformation preserving the unit ball, then there exists a orthogonal transformation $A$, such that  \begin{equation}\label{arbmo}(h_{m(c)}\circ m)(x) = (A \circ h_c) (x).\end{equation} In order to verify this, observe that $$h_{m(c)}( m(c))=0$$ and
$$|h_{m(c)}(m(0))|=\frac{|m(c)-m(0)|}{\sqrt{\rho(m(c), m(0))}}=\frac{|c-0|}{\sqrt{\rho(c,0)}}=|c|.$$
Let $A=h_{m(c)}\circ m \circ h_c$. Since $A(0)=0$ and $|A(c)|=|c|$, we infer that $A$ is an orthogonal transformation. This implies \eqref{arbmo}.

We conclude this subsection with several formulae that will be useful in the sequel, see \cite{alfors}

\begin{equation}\label{poMT}d_h(x,y)=\frac{1}{2} \log \frac{1+|h_a(x)|}{1-|h_a(x)|};\end{equation}

\begin{equation}\label{jaka}(1-|h_a(x)|^2)=\frac{(1-|a|^2)(1-|x|^2)}{\rho(x,a)}.\end{equation}

Finally, Jacobian of the mapping $y=h_a(x)$ is given by  \begin{equation}\label{jaco}J(y,x)=\frac{1-|a|^2}{\rho(a,x)^n}=\frac{(1-|y|^2)^n}{(1-|x|^2)^n}.\end{equation}

%Let $\beta>n-1$. In this case we need the constant $c$, from the equality \begin{equation}\label{cc} c\int_B (1-|\varphi_a(x)|^2)^\beta (1-|x|^2)^{-n}\mathrm{d}\lambda(x)=1.\end{equation}

%$$1-|\varphi_a(x)|^2=\frac{(1-|x|^2)(1-|a|^2)}{\rho(a,x)},$$
 %inserting in \eqref{cc} we obtain $$c\int_{B}(1-|y|^2)^{\beta-n} \mathrm{d}\lambda(y)=1,$$ which after straightforward calculation yields that $$c=\frac{\pi ^{-n/2} \Gamma\left(1+\beta-\frac{n}{2}\right)}{\Gamma(1+\beta-n)}.$$

\subsubsection{Bergman distance and automorphisms of the unit ball $B\subset \mathbb{C}^m$}\label{sub2}

Let $P_a$ be the
orthogonal projection of $\mathbb{C}^n$ onto the subspace $[a]$ generated by $a$, and let $$Q=Q_a =
I - P_a$$ be the projection onto the orthogonal complement of $[a]$. Explicitly, $P_0 = 0$ and  $P=P_a(z) =\frac{\left<z,a\right> a}{\left<a, a\right>}$. Set $s_a = (1 - |a|^2)^{1/2}$ and consider the map
\begin{equation}
\label{Bergman_transf}
p_a(z) =\frac{a-P_a z-s_a Q_a z}{1-\left<z,a\right>}.
\end{equation}
Compositions of mappings of the form \eqref{Bergman_transf} and unitary linear mappings of the $\mathbb{C}^n$ consitute the group of holomorphic automorphisms of the unit ball $\mathbb{B}_n\subset \mathbb{C}^m$. It is easy to verify that $p_a^{-1}=p_a$. Moreover, for any automorphism $q$ of the Bergman ball onto itself there exists a unitary transformation $U$ such that \begin{equation}\label{automob}p_{q(c)}\circ q=U\circ p_c.\end{equation}

By using the representation formula \cite[Proposition~1.21]{kezu}, we can introduce the Bergman metric as \begin{equation}\label{bergmet}d_B(z,w)=\frac{1}{2}\log\frac{1+|p_w(z)|}{1-|p_w(z)|}.\end{equation}
If $\Omega= \{z\in \mathbb{C}^n:\left<z,a\right>\neq 1\}$,  then the map $p_a$  is holomorphic in $\Omega$. It is clear that $\overline{\mathbb{B}_n}\subset \Omega$ for $|a| < 1.$

It is well-known that every automorphism $q$ of the unit ball is an isometry w.r. to the Bergman metric, that is: $d_B(z,w)=d_B(q(z),q(w))$.

We also point out the formulae \begin{equation}\label{phia}(1-|p_a(z)|^2)=\frac{(1-|z|^2)(1-|a|^2)}{|1-\left<a,z\right>|^2}\end{equation} and the expression for the Jacobian $$J(z,p_a)=\left(\frac{1-|p_a(z)|^2}{1-|z|^2}\right)^{n+1}=\left(\frac{1-|a|^2}{|1-\left<z,a\right>|^2}\right)^{n+1}$$
which will be needed in the sequel.

%So for $\beta>n$, we choose  $$c\int_{B}(1-|\phi_a(z)|^2)^\beta (1-|z|^2)^{-n-1}\mathrm{d}\lambda(z)=1,$$ which implies that $$c\int_{B}(1-|z|^2)^\beta (1-|z|^2)^{-n-1}\mathrm{d}\lambda(z)=1,$$ i.e. $$c \omega_{2n-1} \int_0^1 r^{2n-1}(1-r^2)^{\beta-n-1}dr=1,$$ where $\omega_{2n-1}=2\frac{\pi^n}{(n-1)!}.$

%Thus $$c=\frac{\pi ^{-n} \Gamma(\beta)}{\Gamma(\beta-n)}.$$
For all the above facts we refer to monographs by Zhu \cite{kezu} and Rudin \cite{rudin}.

\section{Potentials in hyperbolic balls}

Throughout the paper we will consider those measures $\mu$ defined on the  Borel sigma-algebra on $\mathbb{B}\subset \mathbb{R}^n (\mathbb{C}^m)$ which satisfy the following Lusin's condition (N):

(N) \quad If $\mu(D)=0$, then $\mu(g(D))=0$ (respectively $\mu(h(D))=0$) for any Borel measurable set $D \subseteq \mathbb{B}$ and any M\" obius transformation $g$ of the unit ball $\mathbb{B}^n$ (respectively any holomorphic automorphism $h$ of the unit ball $\mathbb{B}_m$).

 In the present Section we prove the following
\begin{theorem}\label{prop2}
Let $\mu$ be a measure on the unit ball $\mathbb{B}$ which satisfies Lusin's condition $\mathrm{(N)}$ and $A \subseteq \mathbb{B}$ a $\mu$-measureable set, such that $0< \mu(A) < + \infty$.
\begin{enumerate}
\item
 The function  \begin{equation}\label{hane}
G(x) = -\int_{A}\log \frac{(1-|x|^2)(1-|y|^2)}{|x-y|^2+(1-|y|^2)(1-|x|^2)}\mathrm{d}\mu(y), \quad x \in \mathbb{B}^n
\end{equation}
has a unique minimum in $\mathbb{B}$.
\item
The function
\begin{equation}\label{marte}
L(z) = -\int_{A}\log \frac{(1-|z|^2)(1-|w|^2)}{|1-\left<z,w\right>|^2}\mathrm{d}\mu(w), \quad z \in \mathbb{B}_m
\end{equation}
has a unique minimum in $\mathbb{B}$.
\end{enumerate}
\end{theorem}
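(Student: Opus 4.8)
The plan is to prove that each of $G$ and $L$ is a finite, continuous, coercive function on the corresponding ball which is, in addition, \emph{strictly convex along every geodesic}; existence and uniqueness of the minimum then follow from the standard argument for such functions. The one genuinely geometric step is a rewriting of the integrands. By \eqref{jaka} the integrand in \eqref{hane} equals $-\log\bigl(1-|h_y(x)|^2\bigr)$, and by \eqref{poMT} one has $|h_y(x)|=\tanh d_h(x,y)$, so this integrand equals $2\log\cosh d_h(x,y)\ge 0$; hence $G(x)=\int_A 2\log\cosh d_h(x,y)\,\mathrm d\mu(y)$. In exactly the same way, \eqref{phia} and \eqref{bergmet} give $L(z)=\int_A 2\log\cosh d_B(z,w)\,\mathrm d\mu(w)$ with nonnegative integrand. (A direct differentiation of \eqref{hane}, with the help of \eqref{Mobius_ball}, moreover yields $\nabla G(x)=\frac{2}{1-|x|^2}\int_A h_x(y)\,\mathrm d\mu(y)$, so that the critical points of $G$ are precisely the points $c$ with $\int_A h_c(y)\,\mathrm d\mu(y)=0$; this is the link between Theorem~\ref{prop2} and the barycenter statements, but it is not needed below.)

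Next I would dispatch the analytic preliminaries. One first checks that the integral defining $G$ is finite at some — hence, by the triangle inequality for $d_h$ together with $\cosh(a+b)\le 2\cosh a\cosh b$, at every — point of $\mathbb{B}^n$; the same inequality shows $G$ is bounded on compact subsets, so $G$ is continuous by dominated convergence (and smooth by differentiation under the integral sign). For coercivity, if $|x_k|\to 1$ then for each fixed $y$ in the (open) ball $d_h(x_k,y)\ge d_h(x_k,0)-d_h(0,y)\to\infty$, since $d_h(x_k,0)\to\infty$ by \eqref{pome}; Fatou's lemma and $\mu(A)>0$ then force $G(x_k)\to\infty$. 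Hence the sublevel sets of $G$ are compact subsets of $\mathbb{B}^n$ and $G$ attains its infimum. The same three steps, with $d_B$ in place of $d_h$, apply to $L$ on $\mathbb{B}_m$.

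The heart of the matter is strict convexity along geodesics. Let $\gamma$ be a unit‑speed geodesic of the hyperbolic ball and fix $y\in\mathbb{B}^n$; set $r(s)=d_h(\gamma(s),y)$ and $f(s)=\cosh r(s)$, which is a smooth function of $s$. Since $r(\cdot)=d_h(\cdot,y)$ is $1$‑Lipschitz along $\gamma$ we have $|r'|\le 1$, hence $(f')^2=\sinh^2 r\,(r')^2\le \sinh^2 r$. As the hyperbolic ball is a Hadamard manifold whose sectional curvature is bounded above by $-1$, the Hessian comparison theorem gives $\operatorname{Hess} r\ge\coth(r)\,(g-\mathrm dr\otimes\mathrm dr)$, so that $f''=(r')^2\cosh r+r''\sinh r\ge(r')^2\cosh r+\coth(r)\bigl(1-(r')^2\bigr)\sinh r=\cosh r=f$ (at the isolated parameter where $r=0$ one instead uses directly that $s\mapsto 2\log\cosh(s-s_0)$ has positive second derivative). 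Therefore
\[
\bigl(2\log f\bigr)''=\frac{2\bigl(f''f-(f')^2\bigr)}{f^2}\ge\frac{2\bigl(f^2-\sinh^2 r\bigr)}{f^2}=\frac{2}{f^2}>0,
\]
so $s\mapsto 2\log\cosh d_h(\gamma(s),y)$ is strictly convex for every $y\in\mathbb{B}^n$. Integrating this strict inequality over $A$ against $\mu$ (using $\mu(A)>0$) shows that $G\circ\gamma$ is strictly convex. The identical argument — using that the Bergman ball is likewise a Hadamard manifold whose sectional curvature is bounded above by a negative constant, after the normalization built into \eqref{bergmet} — shows $L\circ\gamma$ is strictly convex along every Bergman geodesic.

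Finally, a strictly geodesically convex function on a Hadamard manifold that attains its infimum does so at a unique point: for two minimizers $p\ne q$ the midpoint of the geodesic joining them would carry a strictly smaller value. Together with the existence of a minimizer established above, this proves (1) and (2) for both $G$ and $L$. I expect the main obstacle to be the convexity step for the Bergman ball, where one must pin down a clean upper curvature bound for its Bergman metric compatible with the normalization of \eqref{bergmet} (the claim of constant sectional curvature should be used with care, as for $m>1$ complex hyperbolic space has only pinched, not constant, sectional curvature). If one prefers to avoid comparison geometry altogether, the real case can be done by a direct computation: along a Euclidean diameter $\gamma(t)=(\tanh t)\,u$, which is a hyperbolic geodesic, \eqref{jaka} gives
\[
2\log\cosh d_h(\gamma(t),y)=\log\!\bigl(1+|y|^2\tanh^2 t-2\langle u,y\rangle\tanh t\bigr)-\log\bigl(1-\tanh^2 t\bigr)-\log\bigl(1-|y|^2\bigr),
\]
and one verifies by elementary calculus that the right‑hand side is a strictly convex function of $t$; combined with the homogeneity of the hyperbolic ball this again yields strict convexity of $G$ along every geodesic, and the Bergman case is treated analogously using the automorphisms \eqref{Bergman_transf}.
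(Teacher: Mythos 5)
Your proposal is correct, and it reaches the conclusion by a genuinely different route at the one substantive step. The paper's proof rewrites the integrand as $\log\cosh^2 d_h(x,y)$ exactly as you do, but then invokes its Lemma~\ref{l1}: the distance functions $d_h(\cdot,p)$ and $d_B(\cdot,q)$ are geodesically convex, proved purely metrically from the nonpositive-curvature quadratic inequality \eqref{w2} together with the triangle inequality; convexity of $G$ then follows because $\log\cosh^2 t$ is increasing and convex, and uniqueness follows from coercivity plus the ``strictly geodesically convex functions have at most one local minimum'' proposition, just as in your last step. You instead skip convexity of the distance function altogether and apply the Hessian comparison theorem under the upper curvature bound $-1$ to get $(\cosh r)''\ge\cosh r$ along unit-speed geodesics, hence $\bigl(2\log\cosh r\bigr)''\ge 2/\cosh^2 r>0$. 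This buys something real: the composition argument in the paper only yields \emph{weak} convexity without further discussion (the distance function is not strictly convex along geodesics through $y$, and the increasing-convex-composed-with-convex step is only strict when the endpoint values differ), whereas your computation gives an explicit positive modulus of convexity for every $y$ and every geodesic, so strictness of $G$ and $L$ is airtight after integrating over a set of positive $\mu$-measure. The price is that you need smooth comparison geometry and a curvature normalization; your caution about the Bergman ball is well placed (the paper's assertion of constant sectional curvature is really about constant \emph{holomorphic} sectional curvature), but in the normalization of \eqref{bergmet} the sectional curvatures of the Bergman ball are pinched in $[-4,-1]$, so your bound applies and the complex case goes through; your fallback via explicit computation along diameters plus homogeneity is also sound. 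Your existence argument (finiteness, continuity, Fatou for coercivity) is essentially the paper's, only more carefully justified; note that both you and the paper implicitly assume $G$ is finite at one point, which is an unstated hypothesis on $\mu$ and $A$ rather than a consequence of $\mu(A)<\infty$.
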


\subsection{Auxiliary results}

In order to prove the above theorem we need some results about geodesic convexity of distance functions in hyperbolic balls.

\begin{definition}
We say that $f:M\to \mathbb{R}$ is geodesically (strictly convex), if for every pair $a,b$ of different points a geodesic line $\gamma:[0,1]\to M$ that connects $a$ and $b$ with $\gamma(0)=a$ and $\gamma(1)=b$, we have that $f(\gamma(t))< (1-t) f(a)+t f(b)$ ($f(\gamma(t))\le  (1-t) f(a)+t f(b)$), $0< t< 1$. Note that, the parametrization $\gamma$ of geodesic line $\gamma[0,1]$ is is proportionally to arc length.
\end{definition}\label{propo32}
\begin{proposition}\label{prop} Geodesically strictly convex function has no more than one local minimum in the unit ball.
\end{proposition}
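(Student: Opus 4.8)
The plan is to prove the slightly stronger statement that \emph{any} local minimum of a geodesically strictly convex function $f:\mathbb{B}\to\mathbb{R}$ is automatically its unique global minimum; the proposition is then immediate, since two distinct points cannot both be strict global minima.

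First I would fix a local minimum $a\in\mathbb{B}$, so that $f(x)\ge f(a)$ for all $x$ in some neighborhood $U$ of $a$, and let $b\in\mathbb{B}$ be an arbitrary point with $b\neq a$. Let $\gamma:[0,1]\to\mathbb{B}$ be a geodesic with $\gamma(0)=a$, $\gamma(1)=b$, parametrized proportionally to arc length; such a geodesic exists because both the hyperbolic ball and the Bergman ball are complete (indeed Hadamard) manifolds, so by the Hopf--Rinow theorem any two points are joined by a minimizing geodesic. Since $\gamma$ is continuous and $\gamma(0)=a$, there is some $\varepsilon\in(0,1)$ with $\gamma(t)\in U$ for all $t\in[0,\varepsilon)$, hence $f(\gamma(t))\ge f(a)$ on that interval. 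Combining this with strict geodesic convexity, for every $t\in(0,\varepsilon)$ we get
\[
f(a)\le f(\gamma(t)) < (1-t)f(a)+t\,f(b),
\]
which rearranges to $t\,f(a)<t\,f(b)$, i.e. $f(a)<f(b)$. As $b$ was an arbitrary point distinct from $a$, this shows $a$ is the strict global minimum of $f$ on $\mathbb{B}$, so no other point can be even a local minimum.

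I do not expect a serious obstacle here: the argument is soft and essentially one line once the setup is in place. The only points that deserve a word of care are (i) the existence of a geodesic segment from $a$ to every other point of $\mathbb{B}$, which is precisely where the geometry of the two model spaces (completeness, nonpositive constant curvature) enters, and (ii) checking that the ``proportional to arc length'' normalization in the definition is the one along which the strict convexity inequality is posited, so that the inequality is valid for the whole segment $t\in(0,1)$ and in particular for small $t$. It is also worth remarking that strict convexity is essential — a merely convex function, such as a constant, can attain its minimum on a whole geodesic.
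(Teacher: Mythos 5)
Your proof is correct and follows essentially the same route as the paper's: compare values of $f$ along the geodesic joining the local minimum to an arbitrary other point, and play off local minimality near one endpoint against convexity along the segment. If anything, your version is slightly tighter, since by invoking \emph{strict} convexity you directly conclude that any local minimum is the strict global minimum (hence unique), whereas the paper's contradiction argument only shows a local minimum is global and leaves the uniqueness step implicit.
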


\begin{proof}
Suppose that $x$ is a local minimum of $f$.
Then there is $\epsilon>0$ so that $f(y) \ge  f(x)$ for $x\in B(x, \epsilon)$.

Suppose that there is $z\in \mathbb{B}$  with
$$f(z) < f(x).$$
Convexity of $\mathbb{B}$ implies
that geodesic $\gamma(t)$  which connects $z$ and $x$ is contained in $\mathbb{B}$ for $t \in  [0, 1]$.
By convexity of $f$, we have
$$f(\gamma(t)) = t f(x) + (1 - t)f(z)< t f(x) + (1 - t)f(x) = f(x).$$
But, as $t \to 1$, $\gamma(t)\to x$ and the previous inequality contradicts local optimality
of $x$.
\end{proof}

\begin{definition}\cite[Definition~3.3.5]{jost}
The Hessian of a differentiable function $f: M\to R$ on a Riemannian manifold $M$ is $\nabla \mathrm{d} f.$

We have $\mathrm{d}f=\sum_{i=1}^n\frac{df }{\mathrm{d}x^i} \mathrm{d}x^i$ in local coordinates, hence $$\nabla df=\left(\frac{\partial^2 f}{\partial x^i \partial x^j}-\sum_{k=1}^n\frac{\partial f}{\partial x^k}\Gamma^k_{ij}\right)\mathrm{d}x^i\otimes \mathrm{d}x^j.$$
\end{definition}
where $\Gamma^k_{ij}$ are  Christoffel symbols.
\begin{proposition}
$f$ is strictly geodesically  convex if its Hessian $\nabla \mathrm{d}f$ is positive definite.
\end{proposition}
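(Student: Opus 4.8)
The plan is to reduce the statement to the elementary one-variable fact that a $C^2$ function on $[0,1]$ with everywhere-positive second derivative is strictly convex. Fix two distinct points $a,b\in M$ and let $\gamma:[0,1]\to M$ be the geodesic with $\gamma(0)=a$, $\gamma(1)=b$ parametrized proportionally to arc length, as in the definition of geodesic convexity. Since $a\neq b$, the speed $|\dot\gamma(t)|$ is a positive constant, so $\dot\gamma(t)\neq 0$ for every $t\in[0,1]$. Put $\varphi(t)=f(\gamma(t))$; this is a $C^2$ function on $[0,1]$.

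First I would establish the second-order chain rule
\[
\varphi''(t)=(\nabla\mathrm{d}f)_{\gamma(t)}\!\big(\dot\gamma(t),\dot\gamma(t)\big)+\mathrm{d}f_{\gamma(t)}\!\big(\nabla_{\dot\gamma}\dot\gamma\big).
\]
Indeed $\varphi'(t)=\mathrm{d}f_{\gamma(t)}(\dot\gamma(t))$ by the ordinary chain rule; differentiating again and using the coordinate expression for $\nabla\mathrm{d}f$ from the definition above, the terms $\partial^2 f/\partial x^i\partial x^j$ contracted with $\dot\gamma^i\dot\gamma^j$ produce the Hessian term, while the Christoffel symbols combine with $\ddot\gamma^k$ to give $\mathrm{d}f$ evaluated on the covariant acceleration $\nabla_{\dot\gamma}\dot\gamma$. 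Because $\gamma$ is a geodesic, $\nabla_{\dot\gamma}\dot\gamma\equiv 0$, and hence $\varphi''(t)=(\nabla\mathrm{d}f)_{\gamma(t)}(\dot\gamma(t),\dot\gamma(t))$. By hypothesis the Hessian is positive definite at every point, and $\dot\gamma(t)\neq 0$, so $\varphi''(t)>0$ for all $t\in[0,1]$.

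Next I would conclude by the one-dimensional argument: if $\varphi\in C^2([0,1])$ and $\varphi''>0$, then for $0<t<1$ we have $\varphi(t)<(1-t)\varphi(0)+t\varphi(1)$, e.g.\ by applying the mean value theorem to $\varphi$ on $[0,t]$ and on $[t,1]$ and comparing the resulting difference quotients, or by integrating $\varphi''$ twice. Translating back, $f(\gamma(t))<(1-t)f(a)+t f(b)$ for every $0<t<1$ and every geodesic joining distinct points $a,b$, which is exactly strict geodesic convexity of $f$.

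I do not anticipate a real obstacle; the only point that needs care is the justification of the second-order formula for $\varphi''$ and the identification of its two summands with the Hessian and the covariant acceleration respectively --- this is precisely where the coordinate formula for $\nabla\mathrm{d}f$ recalled above is invoked, and where one uses that $f$ and $\gamma$ are smooth enough for $\varphi$ to be twice differentiable. The case $a=b$ is vacuous and requires no comment.
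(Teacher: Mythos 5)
Your proof is correct and complete: the paper states this proposition without proof (treating it as a standard fact of Riemannian geometry, cf.\ Jost), and your argument --- computing $\varphi''(t)=(\nabla\mathrm{d}f)_{\gamma(t)}(\dot\gamma(t),\dot\gamma(t))$ along a geodesic, where the Christoffel term is cancelled by the geodesic equation, and then invoking one-variable strict convexity --- is exactly the canonical argument that fills this gap. The identification of the two summands of $\varphi''$ matches the coordinate formula for $\nabla\mathrm{d}f$ given in the paper, and your observation that $\dot\gamma\neq 0$ for $a\neq b$ is precisely what turns positive-definiteness into the strict inequality.
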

\begin{corollary}\label{corr}
If $a$ is a stationary point of geodesically strictly convex function $f$, then $a$ is global minimum of $f$.
\end{corollary}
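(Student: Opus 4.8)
The plan is to reduce the statement to an elementary fact about convex functions of one real variable by restricting $f$ to geodesics emanating from $a$. Fix an arbitrary point $b\in\mathbb{B}$ with $b\neq a$. Since the hyperbolic (respectively Bergman) ball is complete, simply connected and of non-positive sectional curvature, there is a geodesic $\gamma:[0,1]\to\mathbb{B}$ with $\gamma(0)=a$, $\gamma(1)=b$, parametrized proportionally to arc length — this is the same use of ``convexity of $\mathbb{B}$'' already invoked in the proof of Proposition~\ref{prop}. Set $\varphi(t)=f(\gamma(t))$. Geodesic strict convexity of $f$ says precisely that $\varphi$ is a strictly convex function on $[0,1]$ in the ordinary one-variable sense, and, since $f$ is differentiable (as is implicit in speaking of a stationary point and of the Hessian $\nabla\mathrm{d}f$), the function $\varphi$ is differentiable with $\varphi'(0)=\mathrm{d}f_a(\dot\gamma(0))=0$, because $a$ is a stationary point of $f$.

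Next I would combine two standard one-variable facts. First, a differentiable convex function lies above each of its tangent lines, so $\varphi(t)\ge\varphi(0)+\varphi'(0)\,t=\varphi(0)$ for every $t\in[0,1]$; in particular $\varphi(\tfrac12)\ge\varphi(0)$. Second, strict convexity gives $\varphi(\tfrac12)<\tfrac12\varphi(0)+\tfrac12\varphi(1)$. Chaining these, $\varphi(0)\le\varphi(\tfrac12)<\tfrac12\varphi(0)+\tfrac12\varphi(1)$, whence $\varphi(0)<\varphi(1)$, i.e. $f(a)<f(b)$. Since $b$ was an arbitrary point distinct from $a$, this shows that $a$ is the strict, hence unique, global minimum of $f$ on $\mathbb{B}$.

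An alternative route, perhaps more in the spirit of the surrounding text, is to first observe that stationarity together with strict convexity along every geodesic issuing from $a$ makes $a$ a strict local minimum (for each direction one has $\varphi(t)>\varphi(0)$ for all small $t>0$), and then to invoke Proposition~\ref{prop}: its proof in fact establishes that a local minimum of a geodesically strictly convex function is automatically a global minimum, since a strictly lower point would contradict local optimality along the connecting geodesic. Either argument is short.

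The only genuine point requiring care is the regularity and strictness bookkeeping. ``Stationary point'' must be read with $f$ at least $C^1$, which is harmless since in all the intended applications the potentials $G$ and $L$ are real-analytic in the interior of $\mathbb{B}$; and one must avoid weakening the conclusion to $f(a)\le f(b)$ by letting $t\to 0$ in the strict-convexity inequality — it is precisely by evaluating both inequalities at a fixed interior parameter such as $t=\tfrac12$ that one upgrades the conclusion to a strict global minimum. I expect this to be the main, albeit mild, obstacle; the remainder is routine.
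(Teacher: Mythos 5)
Your proof is correct, and it takes a genuinely different route from the paper's. The paper argues through the Hessian: at a stationary point the Christoffel correction terms in $\nabla \mathrm{d}f$ vanish, so the covariant Hessian reduces to the coordinate Hessian $\bigl(\tfrac{\partial^2 f}{\partial x^i\partial x^j}(a)\bigr)$, which the paper asserts is positive definite, making $a$ a local minimum; Proposition~\ref{prop} then upgrades this to a global minimum. You instead work directly from the definition of geodesic strict convexity: restrict $f$ to the geodesic from $a$ to an arbitrary $b$, note $\varphi'(0)=0$, use the tangent-line inequality to get $\varphi(t)\ge\varphi(0)$, and evaluate the strict-convexity inequality at $t=\tfrac12$ to conclude $f(a)<f(b)$. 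Your version buys two things. First, it needs only first-order differentiability of $f$ and the bare definition of strict geodesic convexity, whereas the paper's step ``hence the Hessian at $a$ is positive definite'' does not actually follow from the stated hypotheses: the preceding Proposition only asserts that a positive-definite Hessian \emph{implies} strict geodesic convexity, and the converse can fail (a strictly convex function can have degenerate second derivative at a point, as $t\mapsto t^4$ shows), so the paper's proof tacitly assumes the stronger Hessian condition that does hold for the potentials $G$ and $L$ but is not part of the corollary's hypothesis. Second, your primary argument bypasses Proposition~\ref{prop} altogether and yields the strict inequality $f(a)<f(b)$, hence uniqueness of the minimizer, in one pass; your alternative route (local minimality along each geodesic, then Proposition~\ref{prop}) is closer in spirit to the paper and is equally valid. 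The only point worth making explicit is that ordinary convexity of $\varphi$ on all of $[0,1]$ (needed for the monotone difference quotient / tangent-line step) follows by applying the definition to the sub-geodesics $\gamma|_{[s,t]}$, which is immediate but should be said.
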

\begin{proof}
Since $a$ is stationary point, we have that $\frac{\partial f}{\partial x^k}(a)=0$ for every $k=1,\dots,n$. Hence, the matrix $\left(\frac{\partial^2 f}{\partial x^i \partial x^j}(a)\right)_{{i,j}=1}^n$ is positive definite which implies that $a$ is a local minimum of $f$. By Proposition~\ref{prop}, we conclude that $a$ is a global minimum of $f$.
\end{proof}

Now the following lemma is the main step of proof of main results.
\begin{lemma}\label{l1}
Functions $f(x) = d_h(x,p)$ and $F(z)=d_B(z,q)$ are  geodesically  convex.
\end{lemma}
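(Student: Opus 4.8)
The plan is to reduce each claim, by an isometry, to the case where the second argument is the centre of the ball, and then to use the nonpositivity of the curvature — explicitly, through the hyperbolic Pythagorean theorem, in the real case, and through the Hessian of the distance function in the complex case. For the first step I use that, by \eqref{arbmo} (resp.\ \eqref{automob}) together with the isometry property of $h_a$ (resp.\ $p_a$), the map $h_p$ (resp.\ $p_q$) is an isometry sending $p$ (resp.\ $q$) to the origin; since an isometry maps geodesics to geodesics and preserves arc length, $f$ (resp.\ $F$) is geodesically convex iff $x\mapsto d_h(x,0)$ (resp.\ $z\mapsto d_B(z,0)$) is. By \eqref{poMT} with $a=0$, and by \eqref{bergmet} with $p_0(z)=-z$, these reduced functions are $\tfrac12\log\frac{1+|x|}{1-|x|}$ and $\tfrac12\log\frac{1+|z|}{1-|z|}$, respectively; I must show they are geodesically convex.

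For the hyperbolic ball, fix a geodesic $\gamma$, parametrised proportionally to arc length. If $0\in\gamma$ then $d_h(\gamma(t),0)$ is the arc-length distance along $\gamma$, hence an affine function of $|t-t_0|$, hence convex. If $0\notin\gamma$, let $V$ be the real $2$-plane spanned by two distinct points of $\gamma$; then $0\in V$, the Euclidean orthogonal reflection in $V$ preserves $|x|$ and so is an isometry of $(\mathbb{B}^n,g)$, and its fixed-point set $N=V\cap\mathbb{B}^n$ is therefore a complete totally geodesic surface, isometric to a hyperbolic plane of the same constant curvature $-\kappa^2$. Since $N$ is totally geodesic and $\mathbb{B}^n$ has unique geodesics, $\gamma\subset N$, and $0\in N$ as well. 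Working in $N$, let $q_0\in\gamma$ be the foot of the perpendicular from $0$ to $\gamma$ (elementary in a hyperbolic plane), $h=d_h(0,q_0)$, and $s$ the arc length along $\gamma$ measured from $q_0$; the triangle $0,q_0,\gamma(s)$ has its right angle at $q_0$, so the hyperbolic Pythagorean theorem gives
\[
\cosh\bigl(\kappa\, d_h(\gamma(s),0)\bigr)=\cosh(\kappa h)\cosh(\kappa s).
\]
Setting $\phi(s)=d_h(\gamma(s),0)$ and $C=\cosh(\kappa h)\ge 1$, a direct differentiation of $\phi(s)=\kappa^{-1}\operatorname{arccosh}\bigl(C\cosh(\kappa s)\bigr)$ gives $\phi''(s)\ge 0$, with equality only when $h=0$; as reparametrisation by a positive constant preserves convexity, $f\circ\gamma$ is convex.

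For the Bergman ball this planar reduction is unavailable, and this is the main obstacle: in $\mathbb{B}_m$ a generic real $2$-plane through the origin is not totally geodesic, because the holomorphic and the totally real sectional curvatures differ. Instead I use that $(\mathbb{B}_m,g)$ is complete, simply connected and of nonpositive sectional curvature — a Cartan--Hadamard manifold; in fact, by the Example, of constant negative curvature — so that the distance function $d_B(\cdot,0)$ is smooth on $\mathbb{B}_m\setminus\{0\}$ with positive semidefinite Hessian there: in a frame adapted to the unit-speed radial geodesic $\sigma$ from $0$, the Hessian vanishes in the radial direction, while on a unit direction orthogonal to $\sigma$ it equals $\langle\nabla_{\dot\sigma}J,J\rangle/|J|^2$ for the Jacobi field $J$ along $\sigma$ with $J(0)=0$ in that direction, which is $\ge 0$ because $|J|$ is nondecreasing in nonpositive curvature (the Hessian comparison theorem; cf.\ \cite{jost}). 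Hence, along any geodesic $\gamma$, $\dfrac{d^2}{dt^2}\,d_B(\gamma(t),0)=\bigl(\nabla\mathrm d\,d_B\bigr)\bigl(\dot\gamma(t),\dot\gamma(t)\bigr)\ge 0$ wherever $\gamma(t)\ne 0$, while at a parameter with $\gamma(t)=0$ the function is affine in $|t-t_0|$; in either case $F\circ\gamma$ is convex. (This Cartan--Hadamard argument also settles the hyperbolic ball; the explicit computation above is included only to keep that case elementary.)
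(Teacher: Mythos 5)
Your proof is correct, but it takes a genuinely different route from the paper's. The paper gives a single synthetic argument covering both metrics at once: it invokes the quadratic comparison inequality for nonpositively curved spaces, $d^2(p,\gamma(t))\le(1-t)d^2(p,\gamma(0))+t\,d^2(p,\gamma(1))-t(1-t)d^2(\gamma(0),\gamma(1))$ (cited as \cite[eq.~4.8.7]{jost}), and then observes that the right-hand side is at most $\bigl((1-t)d(p,\gamma(0))+t\,d(p,\gamma(1))\bigr)^2$ precisely because of the triangle inequality $|d(p,\gamma(0))-d(p,\gamma(1))|\le d(\gamma(0),\gamma(1))$; taking square roots gives convexity. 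You instead reduce to the origin by an isometry and then split the two cases: an explicit totally-geodesic-plane plus hyperbolic-Pythagoras computation for $d_h$, and the Hessian comparison theorem for $d_B$. Both routes ultimately rest on nonpositive curvature. The paper's argument is shorter, requires no smoothness of the distance function, and treats both balls uniformly; yours is more self-contained in the real case (an explicit second derivative rather than a cited comparison inequality) and, as you note, your Cartan--Hadamard argument would also subsume the hyperbolic case. Two small points: when you span the $2$-plane $V$ by two distinct points of $\gamma$ you should note they are linearly independent (if they were proportional, the geodesic through them would be a diameter, contradicting $0\notin\gamma$); and the normalization of the constant curvature $-\kappa^2$ is immaterial to your computation, which is good since the paper's stated metric and distance formula are not obviously consistent on this point.
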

\begin{proof}[Proof of Lemma~\ref{l1}]

Let us prove the assertion for $d_h$ and notice that the same proof applies to $d_B$. We will use the fact that metrics $d_h$ and $d_B$ have negative sectional curvature.

Let $\gamma:[0,1]\to D$ be the geodesic line connecting $a$ and $b$ so that $\gamma(t)$ divides the geodesic arc $ab$ into the ratio $t: 1-t$.
We need to prove that
\begin{equation}
\label{w1}
d_h(p, \gamma(t))\le (1-t)d_h(p, \gamma(0))+t d_h(p, \gamma(1)).
\end{equation}

We start from the triangle inequality
$$|d_h(p, \gamma(0))-d_h(p, \gamma(1))|\le d_h(\gamma(0),\gamma(1)).$$
This inequality is equivalent with
$$d_h^2(p, \gamma(1))+d_h^2(p, \gamma(0))-d_h^2(\gamma(0), \gamma(1))\le 2 d_h(p, \gamma(1))d_h(p, \gamma(0))$$
which is in its turn equivalent with
\begin{equation}
\label{w3}
(1-t)d_h^2(p, \gamma(0))+t d_h^2(p, \gamma(1))- t (1-t)d_h^2(\gamma(0), \gamma(1))$$ $$\le ((1-t)d_h^2(p, \gamma(0))+t d_h^2(p, \gamma(1)))^2.
\end{equation}

On the other hand, the following formula holds (see \cite[eq. 4.8.7]{jost})
\begin{equation}
\label{w2}
d_h^2(p, \gamma(t))\le (1-t)d_h^2(p, \gamma(0))+t d_h^2(p, \gamma(1))- t (1-t)d_h^2(\gamma(0), \gamma(1)).
\end{equation}

By comparing inequalities (\ref{w2}) and (\ref{w3}) we obtain (\ref{w1}) which completes the proof.

\end{proof}
\subsection{Proof of Theorem~\ref{prop2}}

\begin{proof}

Using relations \eqref{poMT} and \eqref{jaka} we have that
 $$-\log \frac{(1-|x|^2)(1-|y|^2)}{\rho(y,x)} = \log \cosh^2 \left( d_h(x,y) \right).$$

Hence, function (\ref{hane}) can be written as
$$
G(x) = \int_{A} \log \cosh^2 \left( d_h(x,y) \right)\mathrm{d}\mu(y).
$$
We have already checked that $d_h(x,w)$ is a convex function of $x$ for fixed $w$. Now, since $\log \cosh^2 t$ is an increasing convex function of the real variable $t$ and its second derivative equals to $ 2 \mathrm{sech}^2 t$, its integral is convex. Hence, the function $G$ is strictly convex.

Let $$P(x):= -\log \frac{(1-|x|^2)(1-|y|^2)}{\rho(y,x)}.$$ Since  $\lim_{|x|\to 1}P(x)=+\infty$, it follows that $\lim_{|x|\to 1}G(x)=+\infty$. Since  $P$ is smooth, so is $G$ and   $G$ has the unique minimum for $a\in \mathbb{B}$ (because of Proposition~\ref{propo32}).

The second point of Theorem \ref{prop2} for the function (\ref{marte}) can be proven in an analogous way by using Lemma~\ref{l1} and relations \eqref{bergmet} and \eqref{phia}.

\end{proof}

\begin{remark}
Although Theorem \ref{prop2} is valid for any measure $\mu$, it is particularly meaningful in those special cases when the measure $\mu$ is conformally (or holomorphically) invariant (meaning that $\mu(D)=\mu(g(D))$ for any subset $D\subset \mathbb{B}$ and any automorphism $g$). These special cases of Theorem \ref{prop2} are emphasized throughout our further exposition.

\end{remark}

\section{Barycenters in Poincar\' e balls} \label{balls}

We first introduce the notion of barycenter of a set w.r. to any measure $\mu$.

\begin{theorem}\label{mainth1}

Let $\mu$ be a measure in the unit ball $\mathbb{B}^n \subset \mathbb{R}^n$ which satisfies Lusin's condition $\mathrm{(N)}$ and $D$ a $\mu$-measurable subset of $\mathbb{B}^n$, such that $0 < \mu(D) < + \infty$.
\begin{enumerate}
\item
There is a unique  point  $b=b(D)\in \mathbb{B}$, such that  $$\int_{D}h_b(x) \mathrm{d}\mu(x)=0$$
where $h_b$ is M\" obius transformation given by \eqref{Mobius_ball}.
\item
Point $b(D)$ is unique minimum of the function \eqref{hane}.
\end{enumerate}
\end{theorem}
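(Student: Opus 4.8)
The plan is to derive both assertions from Theorem~\ref{prop2}, by showing that the vanishing of the vector integral $\int_D h_b(x)\,\mathrm d\mu(x)$ is precisely the critical point equation for the potential $G$ from \eqref{hane}, up to a positive scalar factor. First I would compute the Euclidean gradient of the integrand $P_y(x) := -\log\frac{(1-|x|^2)(1-|y|^2)}{\rho(y,x)}$ with respect to $x$, where $\rho(y,x)=|x-y|^2+(1-|y|^2)(1-|x|^2)$. A direct differentiation gives $\nabla_x \log(1-|x|^2) = \frac{-2x}{1-|x|^2}$ and $\nabla_x\log\rho(y,x) = \frac{2(x-y) - 2x(1-|y|^2)}{\rho(y,x)}$, so that $\nabla_x P_y(x) = \frac{2x}{1-|x|^2} + \frac{2(x-y)-2x(1-|y|^2)}{\rho(y,x)}$. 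The key algebraic step is to verify that this equals a positive multiple of $h_b$ evaluated appropriately; more precisely, I expect the clean identity
\[
\nabla_x P_y(x)\Big|_{x=b} = \frac{c(b,y)}{1-|b|^2}\, h_b(y)
\]
for some positive scalar $c(b,y)$, after recognizing that $a|x-a|^2+(1-|a|^2)(a-x)$ over $\rho$ is exactly $h_a(x)$ from \eqref{Mobius_ball} with the roles of the base point and the variable matched correctly. This reduces to a finite computation using the explicit form \eqref{Mobius_ball}.

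Once this identity is established, integrating over $D$ against $\mathrm d\mu$ shows that $\nabla G(b) = 0$ if and only if $\int_D \frac{c(b,y)}{1-|b|^2}h_b(y)\,\mathrm d\mu(y) = 0$. Since $c(b,y)/(1-|b|^2)$ is a strictly positive scalar for each $y$, this is equivalent to $\int_D h_b(y)\,\mathrm d\mu(y)=0$ provided the positive weight can be absorbed — but here one must be slightly careful, because a weighted vanishing integral is not literally the unweighted one. The cleanest route avoiding this issue is to change variables: apply the Möbius transformation $h_b$ and use the invariance built into Lusin's condition (N) together with the Jacobian formula \eqref{jaco}. Alternatively, and more simply, I would directly invoke Theorem~\ref{prop2}(1): $G$ is strictly geodesically convex, smooth, and blows up at the boundary, hence has a unique minimizer $b=b(D)\in\mathbb B^n$, which by Corollary~\ref{corr} is its unique stationary point, i.e. the unique solution of $\nabla G(b)=0$. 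Combining with the gradient identity, $b$ is the unique point with $\int_D h_b(x)\,\mathrm d\mu(x)=0$, which proves (1); and (2) is then immediate since $b(D)$ was defined as that minimizer.

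The main obstacle I anticipate is the gradient computation and in particular pinning down that the positive scalar $c(b,y)$ is genuinely positive and that the proportionality is exact, so that the critical-point equation and the barycenter equation coincide on the nose rather than merely up to a $y$-dependent weight. If the proportionality turns out to carry a nontrivial $y$-dependent positive factor $c(b,y)$, then strictly speaking the zero set of $\nabla G$ coincides with the zero set of $\int_D h_b\,\mathrm d\mu$ only after one checks that replacing the measure $\mathrm d\mu(y)$ by $c(b,y)\,\mathrm d\mu(y)$ — which still satisfies (N) and still has finite positive total mass — does not change the barycenter; that observation closes the gap. A secondary technical point is justifying differentiation under the integral sign, which follows from the local boundedness of $\nabla_x P_y(x)$ on compact subsets of $\mathbb B^n$ uniformly in $y$, together with $0<\mu(D)<\infty$.
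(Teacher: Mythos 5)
Your overall strategy is sound and does yield a complete proof, but it takes a genuinely different route from the paper, and one of your contingency plans would not work, so let me address both points. The good news first: the identity you ``expect'' is exact and the feared $y$-dependent weight never appears. A direct computation with $\rho(x,y)=|x-y|^2+(1-|x|^2)(1-|y|^2)$ gives
\[
\nabla_x P_y(x)=\frac{2x}{1-|x|^2}+\frac{2\bigl(x|y|^2-y\bigr)}{\rho(x,y)}
=\frac{2x\rho(x,y)+2(1-|x|^2)\bigl(x|y|^2-y\bigr)}{(1-|x|^2)\,\rho(x,y)}
=\frac{2}{1-|x|^2}\,h_x(y),
\]
because the numerator collapses to $2\bigl[x|x-y|^2+(1-|x|^2)(x-y)\bigr]=2\rho(x,y)h_x(y)$ by \eqref{Mobius_ball}. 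Hence $\nabla G(b)=\frac{2}{1-|b|^2}\int_D h_b(y)\,\mathrm d\mu(y)$, the zero set of $\nabla G$ coincides exactly with the solution set of the barycenter equation, and Theorem~\ref{prop2} together with Proposition~\ref{prop} and Corollary~\ref{corr} (unique minimum, every stationary point is a global minimum, smoothness and blow-up at the boundary) finishes both assertions. The paper proceeds differently: it conjugates $G$ by the involution $h_a$ sending the minimizer $a$ to $0$, rewrites $G_1(x)=G(h_a(x))=\int_A\log\cosh^2 d(x,h_a(y))\,\mathrm d\mu(y)$ using the isometry property, and evaluates the gradient only at $x=0$, where $\rho(0,\cdot)\equiv 1$ makes the computation trivial and immediately yields $\nabla G_1(0)=-2\int_A h_a(y)\,\mathrm d\mu(y)$. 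The paper's trick buys a shorter calculation at the cost of the conjugation step; your direct computation buys the stronger statement that the gradient is proportional to the barycenter field at \emph{every} point, which makes the ``unique zero of $\nabla G$ equals unique barycenter'' equivalence transparent.

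One warning: your proposed fallback --- absorbing a genuinely $y$-dependent positive factor $c(b,y)$ into the measure and claiming this ``does not change the barycenter'' --- is not a valid repair. Replacing $\mathrm d\mu$ by $c(b,y)\,\mathrm d\mu$ changes the barycenter in general, and moreover the factor would depend on the unknown point $b$, so the equivalence between the two zero sets would genuinely break. Had the constant not come out independent of $y$, you would have needed the paper's conjugation argument (or an equivalent change of variables) rather than that observation. Since the constant is $2/(1-|b|^2)$, independent of $y$, the issue is moot here, but the proof should state and verify the identity rather than leave it conditional.
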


\begin{proof}
Consider the function $$G(x) = -\int_{A}\log \frac{(1-|x|^2)(1-|y|^2)}{\rho(x,y)}\mathrm{d}\mu(y).$$

By Theorem \ref{prop2}, $G$ has a unique minimum $a\in \mathbb{B}$. Let $h_a$ be a M\"obius transformation of the unit ball onto itself so that $h_a(0)=a$ and $h_a\circ h_a=\mathrm{id}$. Then the function $G_1(x) = g(h_a(x))$ has unique minimum at $x=0$.

Moreover
\[
\begin{split}
G_1(x) &=  -\int_{A}\log \frac{(1-|h_a(x)|^2)(1-|y|^2)}{\rho(h_a(x),y)}\mathrm{d}\mu(y)
\\&= \int_{A}\log \cosh^2 \left( {d(h_a(x),y)} \right)\mathrm{d}\mu(y)
\\&= \int_{A}\log \cosh^2 \left( {d(x,h_a^{-1}(y))} \right)\mathrm{d}\mu(y)
\\&= \int_{A}\log \cosh^2 \left( {d(x,h_a(y))} \right)\mathrm{d}\mu(y).
\end{split}
\]
Then $$\nabla G_1(x)=\int_{A}\left(\frac{2 x}{1-|x|^2}+ \frac{2x|h_a(y)|^2-2 h_a(y)}{\rho(x, h_a(y))}\right)\mathrm{d}\mu(y) .$$

To justify the differentiation under the integral, observe that $$\left|\frac{2 x}{1-|x|^2}+ \frac{2x|h_a(y)|^2-2 h_a(y)}{\rho(x, h_a(y))}\right|\le \frac{2|x|}{1-|x|^2}+\frac{2+2|x|}{(1-|x|^2)^2}$$ and recall that we require $\mu(A)<\infty$.

Hence,
$$\nabla G_1(0)=-2\int_{A} h_a(y)\mathrm{d}\mu(y).$$ Since $x=0$ is the stationary point of $G_1$, it follows that $$\int_{A} h_a(y)\mathrm{d}\mu(y)=0,$$ which completes the proof.

\end{proof}

\begin{definition}
We say that point $b(D)$ from the above Theorem is barycenter of the set $D$ w.r. to measure $\mu$.
\end{definition}

\subsection{Conformal barycenter in the Poincar\' e ball}

\begin{definition}
Barycenter of a set $D \in \mathbb{B}^n$ w.r. to the hyperbolic measure $\Lambda(x)$ defined in (\ref{hyp_meas}) is named {\it conformal barycenter}.
\end{definition}
We will denote the conformal barycenter of $D$ by $c \equiv c(D)$.

From Theorem \ref{mainth1}(1) it follows that the conformal barycenter of $A$ is minimum of the function where the measure $\mu(y)$ is replaced by $\Lambda(y).$

The most transparent and potentially important for applications case is when the set $D$ is finite. In order to address this case, we apply Theorem \ref{mainth1} with $\mu$ being the counting measure:
\begin{equation}
\label{count}
\mu(A) = \left\{ \begin{array}{ll}
         |A| & \mbox{if $A$ is finite};\\
        \infty & \mbox{if $A$ is infinite}.\end{array} \right.
        \end{equation}

Such a choice of $\mu$ yields the following

\begin{corollary}\label{t1}
Assume that $x_1,\dots, x_N$ are points on the unit ball $\mathbb{B}^n\subset \mathbb{R}^n$.
\begin{enumerate}

\item
 There exists a unique (up to a linear isometry) M\"obius transformation $h$ of the unit ball onto itself, such that $$\sum_{k=1}^N h(x_k)=0.$$

\item
Decompose the M\"obius transformation $h$ as $h=A\circ h_c$ for some $c\in \mathbb{B}^n$ and a linear isometry $A$ of the unit ball.
Then point $c$ is unique minimum of the function
$$
G_N(y) = - \sum \limits_{i=1}^N \log \frac{(1-|y|^2)(1-|x_i|^2)}{|y-x_i|^2+ (1-|x_i|^2)(1-|y|^2)}, \quad y \in \mathbb{B}^n.
$$
\end{enumerate}
\end{corollary}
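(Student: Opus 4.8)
The plan is to obtain this as a direct specialization of Theorem~\ref{mainth1} together with the normalization procedure recorded in equations \eqref{arbmo} and \eqref{automob}. First I would let $\mu$ be the counting measure \eqref{count} and take $D=\{x_1,\dots,x_N\}$; then $\mu(D)=N$, so $0<\mu(D)<+\infty$, and the counting measure trivially satisfies Lusin's condition $\mathrm{(N)}$ since a set is $\mu$-null precisely when it is empty, and $g(\varnothing)=\varnothing$. Theorem~\ref{mainth1}(1) then furnishes a point $b=b(D)\in\mathbb{B}^n$ with $\sum_{k=1}^N h_b(x_k)=0$, where $h_b$ is the M\"obius transformation \eqref{Mobius_ball}. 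This already gives existence in part (1) with $h=h_b$. For the uniqueness statement I would argue as follows: the condition $\sum_k h(x_k)=0$ depends on $h$ only through the property that $h$ sends the conformal barycenter to $0$. More precisely, writing an arbitrary M\"obius automorphism as $h=A\circ h_c$ with $A$ a linear isometry, the change-of-variables identity \eqref{arbmo} (equivalently, $h_{m(c)}\circ m = A'\circ h_c$) shows $\sum_k h(x_k)=A\bigl(\sum_k h_c(x_k)\bigr)$, so $\sum_k h(x_k)=0$ iff $\sum_k h_c(x_k)=0$ iff (by the uniqueness in Theorem~\ref{mainth1}(1)) $c=b(D)$. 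Hence $h$ is determined up to the precomposition-irrelevant, postcomposition linear isometry $A$, which is exactly "unique up to a linear isometry."

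For part (2): given $h=A\circ h_c$ satisfying $\sum_k h(x_k)=0$, the previous paragraph shows $c=b(D)$. By Theorem~\ref{mainth1}(2), $b(D)$ is the unique minimum of the function \eqref{hane} with $\mu$ the counting measure on $\{x_1,\dots,x_N\}$; but that function is precisely
\[
G_N(y) = -\sum_{i=1}^N \log \frac{(1-|y|^2)(1-|x_i|^2)}{|y-x_i|^2+(1-|x_i|^2)(1-|y|^2)},
\]
since $\rho(y,x_i)=|y-x_i|^2+(1-|x_i|^2)(1-|y|^2)$. So $c$ is the unique minimizer of $G_N$, as claimed. I would also remark that $G_N$ is finite and smooth on all of $\mathbb{B}^n$ (each $x_i\in\mathbb{B}^n$, so $\rho(y,x_i)>0$ and the argument of each logarithm is positive), and that $\lim_{|y|\to 1}G_N(y)=+\infty$, so the minimum is genuinely attained in the interior; these points are already covered by the proof of Theorem~\ref{prop2} but are worth recalling.

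The only mild subtlety — and the place I expect to spend the most care — is making the "unique up to a linear isometry" statement precise, i.e.\ verifying that the ambiguity in the decomposition $h=A\circ h_c$ is entirely absorbed by the linear isometry factor and does not produce genuinely different $c$'s. This is exactly the content of \eqref{arbmo}: for a fixed conformal barycenter $c$, every M\"obius map $h$ with $h(c)=0$ differs from $h_c$ by a linear isometry on the left, and the equation $\sum_k h(x_k)=0$ forces $h$ to send $c=b(D)$ to $0$ because $A$ is injective and fixes $0$. Apart from that bookkeeping, the corollary is an immediate reading-off of Theorem~\ref{mainth1} with the counting measure, so no new estimates are needed.
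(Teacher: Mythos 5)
Your proposal is correct and follows essentially the same route as the paper: the corollary is obtained by specializing Theorem~\ref{mainth1} to the counting measure \eqref{count} (which trivially satisfies condition (N)), with the ``unique up to a linear isometry'' clause handled by the decomposition $h=A\circ h_c$ and the normalization identity \eqref{arbmo}. Your extra bookkeeping on why $\sum_k h(x_k)=0$ forces $c=b(D)$ is exactly the intended (and in the paper, unwritten) argument, so nothing further is needed.
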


\begin{definition} \label{t3}
The point $c$ from Corollary \ref{t1}(2) is said to be conformal barycenter of the set $\{ x_1,\dots,x_N \}$
\end{definition}

\begin{theorem}
Conformal barycenter is conformally invariant. In other words, if $c=c(D)$ is conformal barycenter of $D$, then $h(c)$ is conformal barycenter of $h(D)$ for any   M\"obius transformation $h$ of the unit ball.
\end{theorem}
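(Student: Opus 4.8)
The plan is to reduce the conformal invariance to the defining property of the barycenter via the vanishing-integral characterization in Theorem~\ref{mainth1}(1), exploiting the fact that the hyperbolic measure $\Lambda$ is invariant under M\"obius transformations. First I would recall that the conformal barycenter $c=c(D)$ is the unique point of $\mathbb{B}^n$ such that $\int_D h_c(x)\,\mathrm{d}\Lambda(x)=0$. Let $m$ be an arbitrary M\"obius transformation of the unit ball; I want to show that $m(c)$ satisfies the analogous identity for the set $m(D)$, i.e.\ $\int_{m(D)} h_{m(c)}(y)\,\mathrm{d}\Lambda(y)=0$, and then invoke uniqueness from Theorem~\ref{mainth1} to conclude $c(m(D))=m(c)$.

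The key computation is a change of variables $y=m(x)$ in the integral $\int_{m(D)} h_{m(c)}(y)\,\mathrm{d}\Lambda(y)$. By invariance of the hyperbolic measure under M\"obius transformations (which follows from the Jacobian formula \eqref{jaco}, since $\mathrm{d}\Lambda(y)=(1-|y|^2)^{-n}\mathrm{d}\lambda(y)$ and $J(y,x)=(1-|y|^2)^n/(1-|x|^2)^n$), this becomes $\int_D h_{m(c)}(m(x))\,\mathrm{d}\Lambda(x)$. Now the crucial structural input is formula \eqref{arbmo}: there exists an orthogonal transformation $A$ with $(h_{m(c)}\circ m)(x)=(A\circ h_c)(x)$. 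Substituting, the integral equals $\int_D A(h_c(x))\,\mathrm{d}\Lambda(x)=A\!\left(\int_D h_c(x)\,\mathrm{d}\Lambda(x)\right)=A(0)=0$, where I pulled the linear map $A$ outside the (vector-valued) integral and used the defining property of $c$. This shows $m(c)$ is the barycenter of $m(D)$.

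I would then note that $m(D)$ is again a Lebesgue-measurable set with $0<\Lambda(m(D))=\Lambda(D)<+\infty$ by measure invariance, so Theorem~\ref{mainth1} applies and guarantees that its barycenter is unique; hence $c(m(D))=m(c)$, which is exactly the claimed conformal invariance. The main obstacle, such as it is, is making sure the ingredients line up cleanly: verifying that $\Lambda$ is genuinely M\"obius-invariant from \eqref{jaco}, and correctly invoking \eqref{arbmo} with the right point (applying it with center $c$ and transformation $m$). Everything else is bookkeeping. An alternative, essentially equivalent route would go through the potential: one checks that $G_{m(D)}(y)$ differs from $G_D(m^{-1}(y))$ by an additive constant (again using measure invariance plus the cocycle identity implicit in \eqref{jaka}), so their minima correspond under $m$, and then apply the second part of Theorem~\ref{mainth1}; but the vector-field argument above is shorter and I would present that one.
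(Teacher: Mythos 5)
Your proposal is correct and follows essentially the same route as the paper's own proof: change of variables $y=m(x)$ with $\Lambda$-invariance from \eqref{jaco}, the identity $h_{m(c)}\circ m=A\circ h_c$ from \eqref{arbmo}, pulling the orthogonal map $A$ out of the vector-valued integral, and concluding by the uniqueness in Theorem~\ref{mainth1}. Your explicit appeal to uniqueness at the end is a point the paper leaves implicit, but the argument is the same.
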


\begin{proof}
We aim to prove that if $$\int_{D} h_c(x) \mathrm{d}\Lambda(x)=0,$$ and $q$ is any  M\"obius transformation, then $$\int_{q(D)} h_{q(c)}(y) \mathrm{d}\Lambda(y)=0.$$
Introduce the change of variables $y=g(x)$. Then by \eqref{jaco}, we have $\mathrm{d}\Lambda(y)=\mathrm{d}\Lambda(x)$. Thus

$$\int_{q(D)} h_{q(c)}(y)\mathrm{d}\Lambda(y)=\int_{D}h_{q(c)}(q(x))\mathrm{d}\Lambda(x).$$ Now, taking into account considerations in Subsection~\ref{sub1}, we obtain that $$h_{q(c)} (q(x))=A h_c(x),$$ for an orthogonal transformation $A$ of the unit ball onto itself. Therefore $$\int_{q(D)} h_{q(c)}(y)\mathrm{d}\Lambda(y)=A\int_{D} h_c\mathrm{d}\Lambda(x)=0$$  which confirms conformal invariance.

The above proof can easily be adapted to demonstrate conformal invariance of the barycenter of a finite set in the sense of Definition \ref{t3}.
\end{proof}

\section{Barycenters in Bergman balls} \label{Bergman}

\begin{theorem}\label{mainth2}
Let $\mu$ be a measure in $\mathbb{B}_m \subset \mathbb{C}^m$ which satisfies Lusin's condition (N) and $K$ a $\mu$-measurable subset of $\mathbb{B}_m$, such that $0 < \mu(K) < + \infty$.
\begin{enumerate}
\item
There exists a unique point $a \equiv a(K) \in \mathbb{B}_m$, such that  $$\int_{K}p_a(z) \mathrm{d}\mu(z)=0,$$
where $p_a$ is the map defined by (\ref{Bergman_transf}).
\item
Point $a(K)$ is the minimum of the function \ref{marte}.
\end{enumerate}
\end{theorem}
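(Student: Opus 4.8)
The plan is to mirror the proof of Theorem~\ref{mainth1} almost verbatim, transferring every step from the real hyperbolic ball $\mathbb{B}^n$ to the Bergman ball $\mathbb{B}_m$, using the analogues of the relevant formulae that were collected in Subsection~\ref{sub2}. First I would invoke Theorem~\ref{prop2}(2): the potential $L(z)$ in \eqref{marte} is strictly geodesically convex and tends to $+\infty$ as $|z|\to 1$, hence it attains a unique minimum at some point $a \equiv a(K)\in\mathbb{B}_m$. This immediately settles the relationship in part (2) once part (1) is proven, so the real work is showing that this same $a$ is the point at which $\int_K p_a(z)\,\mathrm{d}\mu(z)=0$.

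For part (1), I would let $p_a$ be the automorphism from \eqref{Bergman_transf}, which satisfies $p_a(0)=a$ and $p_a\circ p_a=\mathrm{id}$, and form $L_1(z)=L(p_a(z))$; this has its unique minimum at $z=0$. Using \eqref{bergmet} and the identity \eqref{phia}, together with the isometry property $d_B(p_a(z),w)=d_B(z,p_a(w))$, I would rewrite
$$
L_1(z)=\int_K \log\cosh^2\!\left(d_B(z,p_a(w))\right)\mathrm{d}\mu(w),
$$
exactly paralleling the chain of equalities for $G_1$ in the proof of Theorem~\ref{mainth1}. Then I compute $\nabla L_1(z)$ (in the real sense, identifying $\mathbb{C}^m\cong\mathbb{R}^{2m}$) by differentiating under the integral sign, justified as before by the bound $\mu(K)<\infty$ and a uniform local estimate on the integrand's gradient away from the boundary. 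Evaluating at $z=0$, where $d_B(0,p_a(w))=\tfrac12\log\frac{1+|p_a(w)|}{1-|p_a(w)|}$, the gradient collapses to a constant multiple of $\int_K p_a(w)\,\mathrm{d}\mu(w)$; since $0$ is the stationary point of $L_1$, this integral vanishes, giving part (1).

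The main obstacle I anticipate is the explicit gradient computation: unlike the real case, where $-\log\frac{(1-|x|^2)(1-|y|^2)}{\rho(x,y)}$ has a clean gradient $\frac{2x}{1-|x|^2}+\frac{2x|y|^2-2y}{\rho(x,y)}$, in $\mathbb{C}^m$ one must differentiate $-\log\frac{(1-|z|^2)(1-|w|^2)}{|1-\langle z,w\rangle|^2}$ with respect to the real coordinates, treating $|1-\langle z,w\rangle|^2 = (1-\langle z,w\rangle)(1-\langle w,z\rangle)$ carefully. I would organize this via Wirtinger calculus: the real gradient is $2\Re\bigl(\overline{\partial_z(\cdots)}\bigr)$ where $\partial_z$ is the holomorphic derivative, compute $\partial_{z_k}\bigl(-\log(1-|z|^2)+\log(1-\langle z,w\rangle)+\log(1-\langle w,z\rangle)\bigr)$, and check that at $z=0$ the surviving term is proportional to $w$ (equivalently $p_0$-shifted to $p_a(w)$ after the change of variables). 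The domination needed for differentiation under the integral follows from the same reasoning as in Theorem~\ref{mainth1}, since on a compact neighborhood of $z=0$ inside $\mathbb{B}_m$ the integrand's gradient is bounded uniformly in $w\in\mathbb{B}_m$. Everything else — smoothness of $L$, the boundary blow-up, uniqueness — is inherited directly from Theorem~\ref{prop2}(2) and Corollary~\ref{corr}.
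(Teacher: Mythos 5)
Your proposal follows the paper's own proof essentially step for step: invoke Theorem~\ref{prop2} for the unique minimum $a$ of $L$, pull back by the involutive automorphism $p_a$ using the isometry property of the Bergman metric, and read off $\int_K p_a(w)\,\mathrm{d}\mu(w)=0$ from the vanishing of $\nabla L_1$ at the stationary point $z=0$. The Wirtinger-calculus organization of the gradient is just a bookkeeping variant of the explicit formula the paper writes down, so there is nothing substantively different to compare.
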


\begin{proof}
As before, by using \eqref{bergmet} we obtain again
$$L(z) = \int_{A}\log \cosh^2 \left( {d_B(z,w)} \right)\mathrm{d}\mu(w).$$

The function $L$ has the unique minimum $a$ in $\mathbb{B}_m$. Let $p_a$ be an involutive automorphism of the the unit ball onto itself so that $p_a(0)=a$. Then zero is the unique minimum of $L_1(z) = g(p_a(z))$. Since automorphisms are isometries in the Bergman metric $d_B$, we obtain  $$L_1(z) = \int_{A}\log \cosh^2 \left( {d_B(z,p_a(w))} \right)\mathrm{d}\mu(w).$$
Moreover its gradient $$(\partial_{x_1}+i\partial_{y_1}, \dots, \partial_{x_n}+ i \partial_{y_n})$$ in $\mathbb{C}^m\cong \mathbb{R}^{n},$ $m=2n$ is given by

$$\nabla L_1(z)= \int_{A} \left( -\frac{2 p_a(w)}{1-\left<z,p_a(w)\right>}+\frac{2 z}{1-|z|^2} \right) \mathrm{d} \mu (w).$$

By setting $z=0$ in the above integral we get $\nabla L_1(0)=0,$ which implies that $$\int_{A} p_a(w)\mathrm{d}\mu(w)=0.$$

This completes the proof.
\end{proof}

\begin{definition}
We say that the point $a$ from the above Theorem is barycenter of the set $A$ w.r. to the measure $\mu$.
\end{definition}

\subsection{Holomorphic barycenter in the Bergman ball}

\begin{definition}
Barycenter of $K \subseteq \mathbb{B}_m$ w.r. to the hyperbolic measure $\Lambda(z)$ defined by (\ref{hyp_meas}) is named {\it holomorphic barycenter} of the set $K$.
\end{definition}

From Theorem \ref{mainth1}(2) it follows that the holomorphic barycenter of the set $A$ is minimum of the function (\ref{marte}) where the measure $\mu(w)$ is replaced by the hyperbolic measure $\Lambda(w)$.

By choosing the counting measure (\ref{count}) in Theorem \ref{mainth2} we infer the result about holomorphic barycenter of a finite set of points.

\begin{corollary}\label{t2}
Assume that $z_1,\dots, z_N$ are points in the unit ball $\mathbb{B}_m\subset \mathbb{C}^m$.
\begin{enumerate}

\item
 There exists a unique (up to a linear unitary transformation) automorphism  $p(z)$ of the unit ball onto itself, such that $$\sum_{k=1}^N p(z_k)=0.$$

\item
Decompose automorphism $p$ as $p=U\circ p_c$ for a certain $c \in \mathbb{B}_m$ and a linear unitary transformation $U$ of the unit ball.
Then the point $c$ is the unique minimum of the function
$$
L_N(z) = - \sum \limits_{i=1}^N \log \frac{(1-|z|^2)(1-|w_i|^2)}{|1- \langle z,w_i\rangle|^2}
$$
\end{enumerate}
\end{corollary}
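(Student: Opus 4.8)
The plan is to deduce Corollary~\ref{t2} from Theorem~\ref{mainth2} by specializing the measure $\mu$ to the counting measure~\eqref{count} supported on $K=\{z_1,\dots,z_N\}$, and then rephrasing the resulting statements in the language of automorphisms.

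First I would check that the counting measure is admissible. Since every holomorphic automorphism $g$ of $\mathbb{B}_m$ is a bijection of $\mathbb{B}_m$ onto itself, one has $\mu(g(D))=|g(D)|=|D|=\mu(D)$ for every Borel set $D\subseteq\mathbb{B}_m$; in particular Lusin's condition (N) holds trivially. Moreover $0<\mu(K)=N<+\infty$, so Theorem~\ref{mainth2} is applicable. Its part~(1) then produces a point $a=a(K)\in\mathbb{B}_m$ — characterized uniquely among points $b$ by the relation $\int_K p_b\,\mathrm d\mu=0$ — with $\sum_{k=1}^{N}p_a(z_k)=0$. Taking $p:=p_a$ yields the existence half of part~(1), with the linear unitary factor equal to the identity.

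Next I would establish uniqueness up to a unitary. Given an arbitrary automorphism $p$ of $\mathbb{B}_m$ with $\sum_{k=1}^{N}p(z_k)=0$, set $c:=p^{-1}(0)$; applying~\eqref{automob} with $q=p$ and this choice of $c$ (and using $p_0=-\mathrm{Id}$), I can write $p=U\circ p_c$ with $U$ a unitary linear map. Linearity of $U$ then gives $\sum_{k=1}^{N}p_c(z_k)=0$, whence the uniqueness clause in Theorem~\ref{mainth2}(1) forces $c=a(K)$. Consequently every such $p$ is of the form $U\circ p_{a(K)}$; in particular any two automorphisms with the required property differ by a left composition with a unitary, and in any decomposition $p=U\circ p_c$ one necessarily has $c=a(K)$. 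Finally, for the counting measure the functional~\eqref{marte} is exactly $L_N$ (with $w_i=z_i$), so Theorem~\ref{mainth2}(2), together with the uniqueness of the minimizer from Theorem~\ref{prop2}, shows that $L_N$ has the unique minimum $a(K)$, which by the previous step is precisely the point $c$ from $p=U\circ p_c$. This is part~(2).

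I do not expect a genuine obstacle, since the whole argument is a direct specialization of Theorem~\ref{mainth2}. The one step needing a little care is the identification of $c$ with $a(K)$ in the uniqueness argument: it relies on combining the normal form for automorphisms from Subsection~\ref{sub2} with the uniqueness half of Theorem~\ref{mainth2}(1), and everything else is bookkeeping.
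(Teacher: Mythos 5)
Your proposal is correct and follows exactly the route the paper intends: the paper derives Corollary~\ref{t2} by simply invoking Theorem~\ref{mainth2} with the counting measure \eqref{count}, which is precisely your specialization (your verification of condition (N), the reduction of uniqueness to the normal form \eqref{automob} with $p_0=-\mathrm{Id}$, and the identification of \eqref{marte} with $L_N$ are just the bookkeeping the paper leaves implicit).
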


\begin{definition}
The point $c$ from Corollary \ref{t2}(2) is said to be holomorphic barycenter of the set $\{ z_1,\dots,z_N\}$.
\end{definition}

\begin{theorem}
Holomorphic barycenter is holomorphically invariant. In other words, if $c=c(K)$ is holomorphic barycenter of $K$, then $p(c)$ is holomorphic barycenter of $p(K)$ for any automorphism $p$ of the unit ball.
\end{theorem}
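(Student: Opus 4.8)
The plan is to mirror the proof of conformal invariance given in Section~\ref{balls}, replacing the M\"obius transformations of $\mathbb{B}^n$ by holomorphic automorphisms of $\mathbb{B}_m$ and the relation \eqref{arbmo} by its Bergman-ball counterpart \eqref{automob}. Write $c=c(K)$, so that by definition $\int_{K}p_c(z)\,\mathrm{d}\Lambda(z)=0$, with $\Lambda$ the hyperbolic measure on $\mathbb{B}_m$ from \eqref{hyp_meas}. Fixing an automorphism $p$ of $\mathbb{B}_m$, I want to show $\int_{p(K)}p_{p(c)}(\zeta)\,\mathrm{d}\Lambda(\zeta)=0$; once this is established, Theorem~\ref{mainth2} applied with $\mu=\Lambda$ identifies $p(c)$ as the holomorphic barycenter of $p(K)$.

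First I would record that $\mathrm{d}\Lambda$ on $\mathbb{B}_m$ is invariant under every automorphism. For $p=p_a$ this follows from the Jacobian identity $J(z,p_a)=\bigl((1-|p_a(z)|^2)/(1-|z|^2)\bigr)^{n+1}$ together with \eqref{phia}: under $\zeta=p_a(z)$ one has $\mathrm{d}\lambda(\zeta)=\bigl((1-|\zeta|^2)/(1-|z|^2)\bigr)^{n+1}\mathrm{d}\lambda(z)$, hence $\mathrm{d}\Lambda(\zeta)=\mathrm{d}\Lambda(z)$; for a unitary $U$ the claim is clear since $U$ has unit Jacobian and preserves $|z|$, and a general automorphism is a composition $U\circ p_a$. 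In particular $\Lambda(p(K))=\Lambda(K)\in(0,+\infty)$, and $\Lambda$ trivially satisfies Lusin's condition (N), so Theorem~\ref{mainth2} does apply to $p(K)$.

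Next I would carry out the change of variables $\zeta=p(z)$ in the target integral, which by the invariance just noted gives
$$\int_{p(K)}p_{p(c)}(\zeta)\,\mathrm{d}\Lambda(\zeta)=\int_{K}p_{p(c)}\bigl(p(z)\bigr)\,\mathrm{d}\Lambda(z).$$
Now I invoke \eqref{automob}: for the automorphism $p$ there is a unitary transformation $U$ of $\mathbb{C}^m$ with $p_{p(c)}\circ p=U\circ p_c$. Substituting this and pulling the linear map $U$ out of the vector-valued integral yields
$$\int_{p(K)}p_{p(c)}(\zeta)\,\mathrm{d}\Lambda(\zeta)=U\!\int_{K}p_c(z)\,\mathrm{d}\Lambda(z)=U(0)=0,$$
which is exactly the defining relation for the holomorphic barycenter of $p(K)$; by the uniqueness clause of Theorem~\ref{mainth2}, $p(c)=c(p(K))$.

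Finally, the identical computation with $\Lambda$ replaced by the counting measure \eqref{count} — where the change of variables is merely the relabeling $z_k\mapsto p(z_k)$ — establishes holomorphic invariance of the holomorphic barycenter of a finite set in the sense of the preceding Definition. I do not expect any genuine obstacle here; the only points requiring a little care are the bookkeeping of the real (rather than complex) Jacobian, so that its exponent matches the exponent $n+1$ appearing in \eqref{hyp_meas}, and the observation that \eqref{automob} furnishes a \emph{linear} map $U$, which is precisely what permits extracting it from the integral.
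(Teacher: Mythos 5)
Your argument is correct and is exactly the adaptation the paper has in mind: the paper explicitly skips this proof, saying it is "similar to the conformal case," and your proposal reproduces that conformal-case proof step for step in the Bergman setting (invariance of $\mathrm{d}\Lambda$ via the Jacobian identity and \eqref{phia}, change of variables $\zeta=p(z)$, the relation \eqref{automob} to extract a unitary $U$ from the integral, and the uniqueness clause of Theorem~\ref{mainth2}). No substantive difference from the intended proof.
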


The proof of holomorphic invariance in this case is similar to the conformal case, so we skip it.

\begin{remark} \label{Rem_conf_hol}
\begin{enumerate}

\item
Results and notions from sections \ref{balls} and \ref{Bergman} are equivalent for the dimension $2m=n=2$. More precisely, results regarding both hyperbolic and Bergman balls reduce to those from Section \ref{Intro} for Poincar\' e disk when $2m=n=2$.

\item
From the previous point it follows that conformal and holomorphic barycenters coincide for $n=2$. However, they are different in the case of complex dimension greater than one. Namely, in $\mathbb{C}^m\cong \mathbb{R}^n$, $m=2n>1$, M\"obius self-mappings of the unit ball and holomorphic automorphisms are different mappings and their corresponding metrics $d_h$ and $d_B$ are not equivalent.

\item
In balls of real odd dimension, i.e. in $\mathbb{B}^3, \mathbb{B}^5,\dots$ the meaningful notion of holomorphic barycenter does not exists. In such balls one can talk about conformal barycenters only.

\item
 If $D\subset \mathbb{B}$ is symmetric with respect to $z=0$, then its barycenter (both conformal and holomorphic) is equal to zero. Namely, in that case there is a linear isometry $L$ and a partition $\{D_1,D_2\}$ of $D$,  with $|D_1|=|D_2|$ (of equal measure)  so that $\Omega_1=L(D_1)\subset R_+^n$ and $\Omega_2=L(D_2)\subset R_{-}^n$. Then $\int_{\Omega_1}x \mathrm{d}\lambda(x)= -\int_{\Omega_2}x \mathrm{d}\lambda(x)$, and so $$\int_{D}Lx \mathrm{d}\lambda(x)=\int_{\Omega_1+\Omega_2} y \mathrm{d}\lambda(y)=0$$
where $\lambda(\cdot)$ is the Lebesgue measure. In the same way we prove a similar statement for hyperbolic measure.

\end{enumerate}
\end{remark}

%\begin{corollary}
%Assume that $F: \mathbb{S}\to \mathbb{S}$ is a quasisymmetric mapping of the unit circle onto itself. Let $f(z)=|z|F(z/|z|)$ be a radial extension of $f$. Let $\varphi_b(z) = \frac{b-z}{1-z\overline{b}}$ and let $b=\Phi(a)$ be such point so that $$ \int_{\mathbb{S}}\varphi_b\circ f \frac{1-|a|^2}{|a-\zeta|^2}d\sigma(\zeta)=0.$$ Then $\Phi$ coincides with the Douady -Earle extension and thus it is a smooth quasiconformal mapping of the unit disk onto itself that extends $F$.
%\end{corollary}

\section{Examples}

\begin{example}
Consider an interior of the ellipse $D = \{x + i y : 4x^2 + 9y^2 < 1\} \subset \mathbb{B}^2$ with semi-axes $1/2$ and $1/3$. Then both Lebesgue and conformal barycenter are equal to zero (see Remark~\ref{Rem_conf_hol}(4)).

Let $D_1 = h_{1/2}(D)$, where $h_{1/2}(z) = \frac{1/2 - z}{1 - z/2}$ (see Figure 1). By our theorem, the conformal barycenter of $D_1$ with respect to the hyperbolic measure is $1/2$. However, $1/2$ is not the conformal barycenter of $D_1$ with respect to the Lebesgue measure. To verify this, by calculation, we have
\[\begin{split}
\int_{D_1} h_{1/2}(z)d\lambda(z) &= \int_{D}w \frac{(1-1/4)^2}{|1-w/2|^4}d\lambda(w)\\&=\frac{3 \pi  \left(4426 \sqrt{139}-281475 \tanh^{-1}\left[\frac{2}{\sqrt{139}}\right]\right)}{111200}\approx 0.336214\neq 0.\end{split}\]

Since the last integral is not zero, $1/2$ is not the barycenter of $D_1$ with respect to the Lebesgue measure. Numerical methods show that the barycenter of $D_1$ is approximately $0.46$, which is less than $0.5 = 1/2$.
\begin{figure}[H]
\centering
\includegraphics{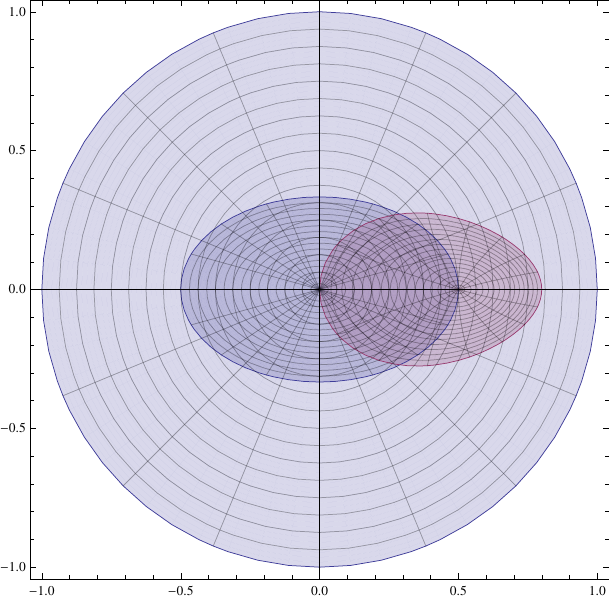}
\caption{Interior of the ellipse $D$ and egg-shape domain $D_1$ in the unit disc}

\end{figure}
\end{example}

The above example demonstrates that the barycenter w.r. to Lebesgue measure is not conformally invariant.

%\begin{proposition}
%Assume that zero is the barycenter of a set $D$ w.r. to Lebesgue measure. Then zero is also conformal barycenter of $D$.
%\end{proposition}

%In order to verify this assertion, check the last point in Remark \ref{Rem_conf_hol}

Finally, we consider two simple examples which may provide some additional intuition on barycenters studied throughout the present paper.

\begin{example}
 Let $z_1,z_2\in \mathbb{D}$ and consider
$$g(z) = -\frac{1}{2} \sum_{k=1}^2 \log \frac{(1-|z|^2)(1-|z_k|^2)}{|1-z \overline{z_k}|^2}.$$
Then $g_z=0$ if and only if

$$\frac{\frac{2}{-1+ z \bar z}+\frac{1}{1-z \overline{z_1}}+\frac{1}{1-z \overline{z_2}}}{z}=0$$
whose solutions are
$$z_0=0$$
$$ \hat z= \frac{ \left(1-|z_1z_2|^2-\sqrt{\left(1-|z_1|^2\right) \left(1-|z_2|^2\right)}|1-z_1\overline{z_2}|\right)}{(1-|z_1|^2)\overline{z_2}+(1-|z_2|^2)\overline{z_1}}.$$
and
$$z'=\frac{ \left(1-|z_1z_2|^2+\sqrt{\left(1-|z_1|^2\right) \left(1-|z_2|^2\right)}|1-z_1\overline{z_2}|\right)}{(1-|z_1|^2)\overline{z_2}+(1-|z_2|^2)\overline{z_1}}.$$

Moreover $|\hat z|<1$ and $|z'|>1$. Namely $|\hat zz'|=1$.

Then we easily show that $z_0=0$ is not the minimum of $g$ provided that $z_1+z_2\neq 0$, which implies that the minimum is the second stationary point $\hat z$, because $g(z) \to \infty$ as $|z|\to 1$.
Moreover it can be easily verified that the point $\hat z$ is in the midpoint of the geodesic line between $z_1$ and $z_2$.
\end{example}

\begin{example}
In the same way we prove that $$a=(1+i)\left(\frac{4}{3}+\frac{5 \sqrt{2}}{3 \left(38636+1164 \sqrt{1101}\right)^{1/6}}-\frac{\left(38636+1164 \sqrt{1101}\right)^{1/6}}{3 \sqrt{2}}\right)$$ $$\approx 0.156266 +0.156266 i$$ is the only stationary point of $$g(z) = -\frac{1}{3} \sum_{k=1}^3 \log \frac{(1-|z|^2)(1-|z_k|^2)}{|1-z \overline{z_k}|^2}, \ \ |z| <1 $$ where $z_1=1/2$, $z_2=i/2$ and $z_3=0$.  In this case if $$\varphi_{a}(z) =\frac{a-z}{1-\overline{a}z},$$ then elementary computations yields   that $$\varphi_a(0)+\varphi_a(1/2)+\varphi_a(i/2)=0,$$ which confirms our theorem.
\end{example}

\section*{Ethics declarations}
\subsection*{Conflict of interest}
The author declares that he has not conflict of interest.

\subsection*{Data statement}
Data sharing not applicable to this article as no datasets were generated or analysed during the current study.

\subsection*{Acknowledgments} We are grateful to  Kehe Zhu for his valuable suggestions, in particular regarding the explicit formula for the Bergman distance function. We also thank Dmitry Zaitcev and Luke Edholm for valuable comments.


\begin{thebibliography}{99}
\bibitem{alfors}
L. V. Ahlfors, \emph{Moebius transformations in several dimensions. (Preobrazovaniya Mebiusa v mnogomernom prostranstve). Transl. from the English. (Russian) Zbl 0663.30001}
Sovremennaya Matematika. Vvodnye kursy. Moskva: Mir. 112 p. R. 0.90 (1986).
\bibitem{actaex}

A. Douady, C. J.  Earle, \emph{Conformally natural extension of homeomorphisms of the circle.}
Acta Math. 157, 23-48 (1986).


\bibitem{cas}

J. Cantarella, H. Schumacher,
Computing the conformal barycenter.
SIAM J. Appl. Algebra Geom. 6, No. 3, 503-530 (2022).
\bibitem{arxiv}
X. Huang S.-Y.  Li
\emph{Bergman metrics with constant holomorphic sectional
curvatures}, arXiv:2302.13456v2, 09 May 2024.
\bibitem{jost}
J. Jost,
\emph{Riemannian geometry and geometric analysis. 7th edition.}
Universitext. Cham: Springer (ISBN 978-3-319-61859-3/pbk; 978-3-319-61860-9/ebook). xiv, 697 p. (2017).
\bibitem{kazuo}
A. Kazuo,
\emph{Curvature operator of the Bergman metric on a homogeneous bounded domain.}
T$\mathrm{\hat{o}}$hoku Math. J., II. Ser. 37, 197-223 (1984).
\bibitem{rudin}
W. Rudin,
\emph{Function theory in the unit ball of $\mathbb{C}^n$.} Reprint of the 1980 original. (English) Zbl 1139.32001
Classics in Mathematics. Berlin: Springer (ISBN 978-3-540-68272-1/pbk). xviii, 436 p. (2008).

     \bibitem{stoll}
    M.~Stoll, \emph{Harmonic and subharmonic function theory on the hyperbolic ball.}
London Mathematical Society Lecture Note Series 431. Cambridge: Cambridge University Press, xv, 225 p. (2016).

\bibitem{kezu}
K. Zhu,
\emph{Spaces of holomorphic functions in the unit ball.}
Graduate Texts in Mathematics 226. New York, NY: Springer (ISBN 0-387-22036-4/hbk). x, 271 p. (2005).


\end{thebibliography}
\end{document}